\documentclass[12pt]{amsart}
\usepackage[margin=1.2in]{geometry}

\usepackage{enumitem}
\usepackage{pgfplots}
\usepackage{amsxtra,amscd}
\usepackage{xcolor}

\usepackage{graphicx}
\usepackage{multirow}
\usepackage{amsmath,amssymb,amsfonts}
\usepackage{amsthm}
\usepackage{mathrsfs}
\usepackage[title]{appendix}
\usepackage{xcolor}

\usepackage{todonotes}
\usepackage{csquotes}%
\usepackage{hyperref}
\usepackage[foot]{amsaddr}

\newcommand{\R}{\mathbb{R}}
\newcommand{\N}{\mathbb{N}}
\newcommand{\ind}{\operatorname{ind}}

\newtheorem{theorem}{Theorem}
\newtheorem{proposition}[theorem]{Proposition}
\newtheorem{fact}[theorem]{Fact}
\newtheorem{lem}[theorem]{Lemma}
\newtheorem{cor}[theorem]{Corollary}

\theoremstyle{definition}%
\newtheorem{remark}[theorem]{Remark}%
\newtheorem{example}[theorem]{Example}%

\begin{document}

\title[Hypercyclic and mixing  composition operators on $\mathscr{O}_M(\R)$]{Hypercyclic and mixing composition operators on $\mathscr{O}_M(\R)$}

\author[T.\ Kalmes]{Thomas Kalmes$^1$}
\thanks{This version of the article has been accepted for publication, after peer review (when applicable) but is not the Version of Record and does not reflect post-acceptance improvements, or any corrections. The Version of Record is available online at: \href{http://dx.doi.org/10.1007/s13398-024-01649-1}{http://dx.doi.org/10.1007/s13398-024-01649-1}}
\address{$^1$Faculty of Mathematics, Chemnitz University of Technology, 09107 Chemnitz, Germany}
\email{thomas.kalmes@math.tu-chemnitz.de}

\author[A. Przestacki]{Adam Przestacki$^2$}
\address{$^2$Faculty of Mathematics and Computer Science, Adam Mickiewicz University, Uniwersytetu Pozna\'{n}skiego 4, 61-614 Pozna\'{n}, Poland}
\email{adamp@amu.edu.pl}

\begin{abstract}
In this paper we characterize mixing composition operators acting on the space $\mathscr{O}_M(\R)$ of slowly increasing smooth functions. Moreover we relate the mixing property of those operators with the solvability of Abel's functional  equation and we give a sufficient condition for sequential hypercyclicity of composition operators on $\mathscr{O}_M(\R)$. This is used to prove that many mixing composition operators are hypercyclic.\\

\noindent Keywords: composition operator, linear dynamics, mixing operator, Abel's equation, hypercyclic operator\\

\noindent MSC 2020: 47B33, 47A16, 46E10
\end{abstract}

\maketitle

\section{Introduction}

The study of dynamical properties of (continuous linear) operators $T\in L(E)$ on topological vector spaces $E$ has attracted much interest in recent years. While there are few articles dealing with dynamics of operators on non-metrizable spaces, the vast majority of contributions concentrates on the dynamics of operators defined on separable Fr\'echet spaces. The advantage of completeness and metrizability stems from the applicability of Baire category arguments which are a powerful tool in this context. One prominent example of such a tool is Birkhoff's transitivity criterion, stating that every topologically transitive operator on a separable Fr\'echet space is hypercyclic. Recall that $T$ is said to be \textit{topologically transitive} if for every pair of non-empty, open subsets $U,V$ of $E$ it holds $T^n(U)\cap V\neq\emptyset$ for some $n\in\N$, while $T$ is \textit{(sequentially) hypercyclic} whenever there is $x\in E$ whose orbit $\{T^nx;\,n\in\N_0\}$ under $T$ is (sequentially) dense in $E$. Clearly, on arbitrary Hausdorff topological vector spaces, every hypercyclic operator is topologically transitive. Moreover, $T$ is called \textit{(topologically) mixing} if for every pair of non-empty, open subsets $U,V$ of $E$ it holds $T^n(U)\cap V\neq\emptyset$ for all sufficiently large $n\in\N$, while $T$ is said to be \textit{chaotic} if it is topologically transitive and if the set of periodic points of $T$ is dense in $E$. In particular, on Fr\'echet spaces, mixing operators are sequentially hypercyclic. 

The aim of this paper is to study dynamical properties of composition operators acting on the space $\mathscr{O}_M(\R)$ of multipliers of the space of rapidly decreasing, smooth functions $\mathscr{S}(\R)$ on $\R$. More precisely, we are interested in (sequential) hypercyclicity and mixing of composition operators on $\mathscr{O}_M(\R)$. Recall that 
$\mathscr{O}_M(\R)$ is given by 
\[
\mathscr{O}_M(\R)=\cap_{m=1}^\infty\cup_{n=1}^\infty\mathscr{O}_n^m(\R),
\]
where 
\[
\mathscr{O}_n^m(\R):=\left\lbrace f\in C^m(\R):\,|f|_{m,n}:=\sup_{x\in\R, 0\leq j\leq m}(1+|x|^2)^{-n}|f^{(j)}(x)|<\infty\right\rbrace.
\]
The space $\mathscr{O}_M(\R)$ is equipped with a natural locally convex topology which makes it a complete, ultrabornological, non-metrizable locally convex space. Hence, $\mathscr{O}_M(\R)$ is not a Fr\'echet space and thus, a mixing operator on $\mathscr{O}_M(\R)$ need not be (sequentially) hypercyclic. The study of dynamical properties of composition operators on $\mathscr{O}_M(\R)$ was initiated by Albanese, Jord\'{a} and Mele. In \cite{MR4419539}, among  other things, they showed that a composition operator $C_\psi:\mathscr{O}_M(\R)\rightarrow\mathscr{O}_M(\R), f\mapsto f\circ\psi$ with a smooth symbol $\psi:\R\rightarrow\R$ is correctly defined (and hence continuous by a standard application of De Wilde's Closed Graph Theorem) if and only if $\psi\in\mathscr{O}_M(\R)$. Moreover, they studied dynamical properties (power boundedness, mean ergodicity) of those operators and showed that the translation operator is mixing on $\mathscr{O}_M(\R)$.

Composition operators play an important role in functional analysis. Their dynamical properties on various spaces of functions and sequences were intensively studied over the past decades by many authors, see \cite{MR3199721,  MR2496409, shapiro, MR3147716, MR2735001,  MR2322758, MR4358440} for (weighted) composition operators on spaces of holomorphic functions, \cite{MR2990627} for composition operators on spaces of analytic functions, \cite{MR3543813} for weighted composition operators on spaces of smooth functions, \cite{MR3999461} for composition operators on spaces of functions defined by local properties, \cite{MR4047689} for weighted translation operators acting on the Schwartz space, \cite{MR2358980} for weighted composition operators on $L^p$-spaces and weighted spaces of continuous functions, and \cite{MR2990627,MR3892306,MR4310296} for hypercyclicity results for non-metrizable locally convex spaces, as well as the references therein.

It is the purpose of this note to complement the results from \cite{MR4419539}. In particular, in Theorems \ref{thm: mixing_bij} and \ref{thm:mix_not_bij} we characterize mixing composition operators $C_\psi$ on $\mathscr{O}_M(\R)$ in terms of their symbol $\psi$. Moreover we show in Theorem \ref{thm: abel} that this property is closely related to the solvability in $\mathscr{O}_M(\R)$ of Abel's functional equation, i.e.\ the problem to find for a given symbol $\psi\in\mathscr{O}_M(\R)$ a function $H\in\mathscr{O}_M(\R)$ which satisfies the equation
\[
H(\psi(x))=H(x)+1.
\]
Additionally, we give a sufficient condition on the symbol $\psi$ of a composition operator $C_\psi$ on $\mathscr{O}_M(\R)$ to be (sequentially) hypercyclic, see Theorem \ref{theorem: sufficient for hypercyclicity}. This condition allows to identify the translation operator to be (sequentially) hypercyclic on $\mathscr{O}_M(\R)$.
Moreover, thanks to Theorem \ref{thm: abel}, we  deduce in  Corollary \ref{cor: hypercyclic} that many mixing composition operators are (sequentially) hypercyclic.

It has been shown by the second author in \cite[Theorem 4.2]{MR3543813} that  a composition operator $C_\psi$ on the Fr\'echet space of smooth functions $C^\infty(\R)$ is hypercyclic if and only if it is mixing if and only if  $\psi$ has a non-vanishing derivative and no fixed points. Applying standard arguments (see Proposition \ref{proposition: necessary for transitivity}), it is easily seen that for a symbol $\psi\in\mathscr{O}_M(\R)$ with topologically transitive $C_\psi$ on $\mathscr{O}_M(\R)$, the corresponding composition operator on the space of smooth functions $C^\infty(\R)$ is topologically transitive as well. We give an example (see Example \ref{ex: not_mi}) that the converse implication is not true.

The paper is organized as follows. In section \ref{section: hypercyclicity}, after recalling some topological properties of $\mathscr{O}_M(\R)$ which will be relevant for our purpose, we study (sequential) hypercyclicity of composition operators on $\mathscr{O}_M(\R)$. In section \ref{section: mixing} we characterize mixing  composition operators in terms of their symbol while in section \ref{section: Abel} we investigate the connection between mixing and the solvability of Abel's equation in $\mathscr{O}_M(\R)$.  

Through the paper, by $\N=\{1,2,3,\ldots\}$ we denote the set of natural numbers. 
For a function $\psi:\R\to \R$
\begin{itemize}
	\item we define $\psi_0:\R\to\R$ as $\psi_0(x)=x$,
	\item for every $n\in \N$ we define $\psi_n:\R\to\R$
	inductively via the  formula $\psi_n(x)=\psi(\psi_{n-1}(x))$,
	\item whenever $\psi$ is injective, for every 
	$n\in\mathbb{Z}\backslash(\mathbb{N}\cup\{0\})$ we define $\psi_n:\psi_{-n}(\R)\to
	\R$ via the rule: $\psi_n(x)=y$ if and only if
	$\psi_{-n}(y)=x$. 
\end{itemize} 
Finally, for further reference, let us recall Fa\`{a} di Bruno's formula which states for smooth functions $f,g$ and $j\in \N$
\[
(f\circ g)^{(j)}(x)=\sum_{\substack{i_1,i_2,\ldots, i_j\geq 0 \\ i_1+2i_2+\cdots +ji_j=j}}
\frac{j!}{i_1!i_2!\cdots i_j!}f^{(i_1+i_2+\cdots +i_j)}(g(x))
\cdot
\prod_{r=1}^j\left(\frac{g^{(r)}(x)}{r!}\right)^{i_r}.
\]
For the definition of hypercyclicity, mixing and other unexplained notions from linear dynamics we refer to \cite{MR2919812}, while we refer to \cite{MR1483073} for anything related to functional analysis.

\section{Hypercyclicity of composition operators}\label{section: hypercyclicity}

Obviously, on$$\mathscr{O}_n^m(\R)=\left\lbrace f\in C^m(\R):\,|f|_{m,n}:=\sup_{x\in\R, 0\leq j\leq m}(1+|x|^2)^{-n}|f^{(j)}(x)|<\infty\right\rbrace$$
a norm is given by $|\cdot|_{m,n}$ and equipped with this norm, $\mathscr{O}_n^m(\R)$ is a Banach space, $m,n\in\N$. Additionally, $\mathscr{O}^m(\R):=\ind_{n\rightarrow\infty}\mathscr{O}_n^m(\R)$ is a complete (LB)-space
. The space  $\mathscr{O}_M(\R)$ is endowed  with its natural locally convex topology, i.e. $\mathscr{O}_M(\R)$ is the projective limit of the (LB)-spaces $\mathscr{O}^m(\R)$, $m\in\N$, where the linking maps from $\mathscr{O}^{m+1}(\R)$ to $\mathscr{O}^m(\R)$ are the inclusions. 

A fundamental system of continuous seminorms on $\mathscr{O}_M(\R)$ is given by
\[
p_{m,v}(f)=\sup_{x\in\R}\max_{0\leq j\leq m}|v(x) f^{(j)}(x)|,\, f\in\mathscr{O}_M(\R), m\geq 0, v\in\mathscr{S}(\R),
\]
where 
$\mathscr{S}(\R)$ is the space of rapidly  decreasing smooth functions (see  \cite{MR0205028}).
In fact  it is not difficult to see that $\mathscr{O}_M(\R)$
is the space of smooth functions $f$ on $\R$ such that $p_{m,v}(f)$ is finite for every $m\geq0$  and  $v\in\mathscr{S}(\R)$, as well as the space of multipliers of  $\mathscr{S}(\R)$. Obviously, $\mathscr{O}_M(\R)$ embeds continuously into $C^\infty(\R)$, and, as is well known, the space of compactly supported smooth functions $\mathscr{D}(\R)$ is dense in $\mathscr{O}_M(\R)$. Consequently, $\mathscr{O}_M(\R)$ is dense in $C^\infty(\R)$. Below we will use the following property of the topology of $\mathscr{O}_M(\mathbb{R})$ (see \cite[Remark  2.2]{MR4419539}).
\begin{fact}
	\label{f: convergence}
	A sequence $(f_n)_{n\in \N}$ of functions from $\mathscr{O}_M(\mathbb{R})$ is convergent to $f$ in  $\mathscr{O}_M(\mathbb{R})$ if and only if $(f_n)_{n\in \N}$ is bounded in $\mathscr{O}_M(\mathbb{R})$ and converges to $f$ in $C^\infty(\mathbb{R})$.
\end{fact}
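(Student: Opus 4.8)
The plan is to verify the two implications separately; the forward one is essentially formal, whereas the converse is where the boundedness hypothesis does genuine work.

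For the implication ``$\Rightarrow$'' I would argue that a convergent sequence in any locally convex space is bounded, which already gives boundedness of $(f_n)_{n\in\N}$ in $\mathscr{O}_M(\R)$, and that the inclusion $\mathscr{O}_M(\R)\hookrightarrow C^\infty(\R)$ is continuous: given a compact $K\subseteq\R$ one chooses $v\in\mathscr{S}(\R)$ with $v\equiv 1$ on $K$, so that the $C^\infty(\R)$-seminorm $g\mapsto\sup_{x\in K}\max_{0\le j\le m}|g^{(j)}(x)|$ is dominated by $p_{m,v}$; hence $f_n\to f$ in $C^\infty(\R)$ as well.

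For the implication ``$\Leftarrow$'' I would first check that $f\in\mathscr{O}_M(\R)$: for fixed $m\ge 0$ and $v\in\mathscr{S}(\R)$ the boundedness gives $C>0$ with $p_{m,v}(f_n)\le C$ for all $n$, and since $f_n^{(j)}\to f^{(j)}$ pointwise one may pass to the limit in $|v(x)f_n^{(j)}(x)|\le C$ to obtain $p_{m,v}(f)\le C$; by the description of $\mathscr{O}_M(\R)$ recalled above this yields $f\in\mathscr{O}_M(\R)$. Replacing $f_n$ by $f_n-f$ I may then assume $f=0$ and must show $p_{m,v}(f_n)\to 0$ for every $m\ge 0$ and $v\in\mathscr{S}(\R)$. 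Fixing such $m,v$ and $\varepsilon>0$, the key observation is that $w(x):=(1+x^2)v(x)$ still belongs to $\mathscr{S}(\R)$, so boundedness yields $C>0$ with $p_{m,w}(f_n)\le C$ for all $n$; then for $|x|\ge R$
\[
\max_{0\le j\le m}|v(x)f_n^{(j)}(x)|=\frac{1}{1+x^2}\max_{0\le j\le m}|w(x)f_n^{(j)}(x)|\le\frac{C}{1+R^2},
\]
which is $<\varepsilon$ for $R$ large, uniformly in $n$. On the compact set $[-R,R]$ I would use $f_n\to 0$ in $C^\infty(\R)$ together with $\|v\|_\infty<\infty$ to conclude that $\sup_{|x|\le R}\max_{0\le j\le m}|v(x)f_n^{(j)}(x)|\to 0$. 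Combining the two estimates gives $p_{m,v}(f_n)<\varepsilon$ for all large $n$, as desired.

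I do not expect a serious obstacle; the only delicate point is making the estimate on the tail $\{|x|\ge R\}$ uniform in $n$, and this is precisely what the boundedness assumption buys: one trades a factor $(1+x^2)^{-1}$ against smallness at infinity while keeping all constants independent of $n$. This is the argument of \cite[Remark~2.2]{MR4419539}.
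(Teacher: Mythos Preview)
Your argument is correct. Note that the paper does not give its own proof of this fact; it merely records it with the reference ``see \cite[Remark~2.2]{MR4419539}'', which you yourself cite at the end. So there is no in-paper proof to compare against, and your direct verification---continuity of the inclusion for the forward direction, and the tail/compact-set splitting using $w=(1+x^2)v\in\mathscr{S}(\R)$ for the converse---is exactly the standard argument one expects behind that citation.
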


Recently, it was shown in \cite{MR4419539} that the translation operator on $\mathscr{O}_M(\mathbb{R})$, i.e.\ $C_\psi\colon\mathscr{O}_M(\mathbb{R})\to \mathscr{O}_M(\mathbb{R})$, $f\mapsto f \circ \psi$ with $\psi(x)=x+1$, is mixing. As already mentioned in the introduction, $\mathscr{O}_M(\mathbb{R})$ is not a Fr\'{e}chet space and thus, Birkhoff's Transitivity Theorem cannot be applied to conclude hypercyclicity of the translation operator. The main objective of this section is to prove that the translation operator is indeed hypercyclic on $\mathscr{O}_M(\mathbb{R})$.

We start with the following trivial observation.

\quad

\begin{proposition}\label{proposition: necessary for transitivity}
	For $\psi\in\mathscr{O}_M(\R)$ consider the following conditions. 
	\begin{itemize}
		\item [(i)] The composition operator $C_\psi:\mathscr{O}_M(\R)\rightarrow\mathscr{O}_M(\R)$ is topologically transitive.
		\item[(ii)] The composition operator $\tilde{C}_\psi:C^\infty(\R)\rightarrow C^\infty(\R), f\mapsto f\circ \psi$ is topologically transitive.
		\item[(iii)] $\psi$ has no fixed points and $\psi'(x)>0$ for every $x\in\R$.
	\end{itemize}
	Then, (i) implies (ii), while (ii) and (iii) are equivalent.
\end{proposition}

\begin{proof}
	Since the inclusion $i:\mathscr{O}_M(\R)\rightarrow C^\infty(\R), f\mapsto f$ is continuous and has dense range, topological transitivity of $C_\psi$ implies the topological transitivity of $\tilde{C}_\psi$ (see, \cite[Proposition 1.13]{MR2919812}). Thus, by \cite[Theorem 4.2]{MR3543813}, $\psi$ has no fixed points and $\psi'(x)\neq 0$ for every $x\in\R$. Hence, we either have $\psi'(x)>0$ for every $x\in\R$ or $\psi'(x)<0$ for each $x\in\R$. Since the latter condition contradicts that $\psi$ has no fixed points, (ii) implies (iii). Another application of \cite[Theorem 4.2]{MR3543813} shows that (iii) implies (ii).
\end{proof}

In contrast to composition operators on the Fr\'echet space $C^\infty(\R)$, the conditions in (i) of the previous Proposition are only necessary but not sufficient for topological transitivity of a composition operator on $\mathscr{O}(\R)$, as will be shown in Example \ref{ex: not_mi} below.

\quad

\begin{theorem}\label{theorem: sufficient for hypercyclicity}
	Let $\psi\in\mathscr{O}_M(\mathbb{R})$ be bijective such that $\psi(x)>x$ as well as $\psi'(x)>0$ for every $x\in\mathbb{R}$. Additionally, assume that
	\begin{equation}\label{eq:hypothesis 1 on psi}
		\forall\,j\in\mathbb{N}\,\exists\,\beta_j\in\mathbb{R}, C_j>0, t_j\in\mathbb{N}\,\forall\,x\in (\beta_j,\infty), n\in\mathbb{N}:\,|(\psi_{-n})^{(j)}(x)|\leq C_j(1+|x|^2)^{t_j}
	\end{equation}
	and
	\begin{equation}\label{eq:hypothesis 2 on psi}
		\forall\,j\in\mathbb{N}\,\exists\,\alpha_j\in\mathbb{R}, C_j>0, t_j\in\mathbb{N}\,\forall\,x\in (-\infty,\alpha_j), n\in\mathbb{N}:\,|(\psi_n)^{(j)}(x)|\leq C_j(1+|x|^2)^{t_j}.
	\end{equation}
	Then, $C_{\psi}:\mathscr{O}_M(\mathbb{R})\rightarrow\mathscr{O}_M(\mathbb{R})$ is sequentially hypercyclic.
\end{theorem}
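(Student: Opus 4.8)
The plan is to construct, for a dense set of pairs $(f,g)$ in $\mathscr{O}_M(\R)$ and any $0$-neighborhood, a single $n$ and a function $h$ close to $f$ with $C_\psi^n h$ close to $g$; in fact, because we want \emph{sequential} hypercyclicity, I would verify the hypotheses of a sequential version of the Gethner--Shapiro / Godefroy--Shapiro criterion adapted to non-metrizable (LB)-type projective limits (as used in \cite{MR4310296}), namely: find dense subspaces $X_0,Y_0\subseteq\mathscr{O}_M(\R)$, a map $S\colon Y_0\to Y_0$ with $C_\psi S=\mathrm{id}$ on $Y_0$, and show $C_\psi^n\to 0$ pointwise on $X_0$ and $S^n\to 0$ pointwise on $Y_0$, with all convergence taking place in $\mathscr{O}_M(\R)$. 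The natural candidate for $X_0$ is the set of smooth functions supported in $(-\infty,\alpha)$ (after enlarging $\alpha$ if necessary so that $\alpha<\beta$ and the relevant iterates behave), for $Y_0$ the smooth functions supported in $(\beta,\infty)$, and for $S$ the operator $Sg=g\circ\psi_{-1}=C_{\psi_{-1}}g$, which makes sense since $\psi$ is bijective. Density of both sets follows because $C_c^\infty(\R)$ is dense in $\mathscr{O}_M(\R)$ and one can translate/spread supports; here one uses $\psi(x)>x$ together with bijectivity so that $\psi_n(x)\to+\infty$ and $\psi_{-n}(x)\to-\infty$ pointwise, which will push supports off to infinity under iteration.

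The two convergence statements are where hypotheses \eqref{eq:hypothesis 1 on psi} and \eqref{eq:hypothesis 2 on psi} enter, and this is the technical heart. Take $g\in Y_0$, say $\operatorname{supp} g\subseteq(\beta,\infty)$; then $S^n g=g\circ\psi_{-n}$ is supported where $\psi_{-n}(x)>\beta$, i.e.\ (using $\psi(x)>x$, hence $\psi_{-n}$ increasing and $\psi_{-n}(x)<x$ only on the right, more carefully $\psi_n(x)>x$ so $\psi_{-n}(x)<x$... ) the support recedes to $+\infty$ as $n\to\infty$, so $S^n g\to 0$ in $C^\infty(\R)$; by Fact~\ref{f: convergence} it remains to show $(S^n g)_n$ is \emph{bounded} in $\mathscr{O}_M(\R)$, i.e.\ for each $m$ and each $v\in\mathscr{S}(\R)$ that $\sup_n p_{m,v}(S^n g)<\infty$. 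Expanding $(g\circ\psi_{-n})^{(j)}$ by Fa\`a di Bruno's formula, every term is a product of $g^{(k)}(\psi_{-n}(x))$ with derivatives $(\psi_{-n})^{(r)}(x)$ of order $r\le j\le m$; on the support the argument $\psi_{-n}(x)$ lies in a region where $x>\beta$ (roughly), so \eqref{eq:hypothesis 1 on psi} bounds each $|(\psi_{-n})^{(r)}(x)|$ by $C_r(1+|x|^2)^{t_r}$ uniformly in $n$, while $\|g^{(k)}\|_\infty$ is a finite constant; hence $|(S^n g)^{(j)}(x)|\le$ (polynomial in $x$) uniformly in $n$, and multiplying by the Schwartz weight $v$ gives a uniform bound — this is exactly boundedness in $\mathscr{O}_M(\R)$. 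The symmetric argument with \eqref{eq:hypothesis 2 on psi} handles $C_\psi^n f=f\circ\psi_n$ for $f\in X_0$: the support condition $\operatorname{supp} f\subseteq(-\infty,\alpha)$ forces $\psi_n(x)<\alpha$ only for $x$ in a set receding to $-\infty$, giving $C_\psi^n f\to 0$ in $C^\infty(\R)$, and Fa\`a di Bruno plus \eqref{eq:hypothesis 2 on psi} gives the uniform boundedness, hence convergence to $0$ in $\mathscr{O}_M(\R)$.

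The main obstacle, I expect, is the bookkeeping around \emph{where} the polynomial bounds \eqref{eq:hypothesis 1 on psi}, \eqref{eq:hypothesis 2 on psi} are valid versus where the supports of the iterated functions actually sit, and in particular arranging $\alpha,\beta$ and the supports so that the argument of $g$ (resp.\ $f$) after composition stays in the region $(\beta,\infty)$ (resp.\ $(-\infty,\alpha)$) where the hypothesis applies — one may need to first translate the support of a given $g\in C_c^\infty(\R)$ far enough to the right (which is harmless for density) so that \emph{all} its backward iterates remain in $(\beta,\infty)$; this uses monotonicity of $\psi$ and $\psi(x)>x$ in an essential way. A secondary point to handle with care is that \eqref{eq:hypothesis 1 on psi}--\eqref{eq:hypothesis 2 on psi} only control derivatives of order $j\ge1$, so the $j=0$ term (just $|g(\psi_{-n}(x))|\le\|g\|_\infty$) and the products of several derivative factors in Fa\`a di Bruno need the polynomial exponents to be added up — but since there are finitely many terms for each fixed $j$ and each exponent $t_r$ is a fixed natural number, the resulting bound is still a fixed polynomial, so multiplication by any $v\in\mathscr{S}(\R)$ kills it uniformly in $n$. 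Once both pointwise-to-zero statements are established, the sequential hypercyclicity criterion for $C_\psi$ on $\mathscr{O}_M(\R)$ applies directly and yields the conclusion.
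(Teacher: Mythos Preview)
Your core analytic strategy---use Fa\`a di Bruno together with hypotheses \eqref{eq:hypothesis 1 on psi}, \eqref{eq:hypothesis 2 on psi} to obtain polynomial bounds on the iterates uniformly in $n$, then invoke Fact~\ref{f: convergence} to upgrade $C^\infty$-convergence to $\mathscr{O}_M$-convergence---is exactly what the paper uses, and your discussion of the bookkeeping (where the bounds are valid versus where the supports sit) is on target. Two points, however, separate your outline from a complete proof.

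First, a concrete error: the sets $X_0=C_c^\infty\bigl((-\infty,\alpha)\bigr)$ and $Y_0=C_c^\infty\bigl((\beta,\infty)\bigr)$ are \emph{not} dense in $\mathscr{O}_M(\R)$. Point evaluation at any $x_0>\alpha$ is a nonzero continuous functional vanishing on $X_0$, so $X_0$ lies in a proper closed hyperplane; ``translating supports'' does not fix this, since translation changes the function being approximated. The easy repair is to take $X_0=Y_0=C_c^\infty(\R)$ (which is sequentially dense) and observe that for each fixed $f\in C_c^\infty(\R)$ the support of $C_\psi^n f=f\circ\psi_n$ eventually lands in $(-\infty,\alpha)$, and that of $S^n g=g\circ\psi_{-n}$ eventually in $(\beta,\infty)$; the finitely many initial terms are irrelevant for boundedness.

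Second, and more substantively, you invoke an abstract sequential Kitai/Gethner--Shapiro criterion for non-metrizable spaces. The standard Kitai argument gives mixing in any topological vector space (the paper uses precisely this in Theorem~\ref{thm: mixing_bij}), but the passage from mixing to the existence of a hypercyclic vector normally goes through Baire category, which is unavailable in $\mathscr{O}_M(\R)$. The paper therefore does not cite a black-box criterion; instead it carries out the explicit construction of a hypercyclic vector by hand: it fixes a sequence $(p_n)\subset C_c^\infty(\R)$ dense with each element repeated infinitely often, recursively selects integers $(k_n)$ so that the functions $p_n\circ\psi_{-k_n}$ have pairwise disjoint supports and satisfy tailored decay conditions, sets $g=\sum_n p_n\circ\psi_{-k_n}$, and then shows $C_\psi^{k_{s_i}}g\to p_N$ in $\mathscr{O}_M(\R)$ along the relevant subsequence by splitting into two tail sums and bounding each via the uniform estimates. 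Your uniform-in-$n$ polynomial bounds are exactly the input this construction needs, so your route and the paper's converge on the same estimates; the difference is that the paper performs the recursive construction and tail-sum verification explicitly rather than deferring to an external result. Unless you can point to a statement in \cite{MR4310296} or \cite{MR3892306} that directly yields sequential hypercyclicity from the conditions you verify, that construction is the missing step.
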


\quad

Before we prove Theorem \ref{theorem: sufficient for hypercyclicity} we make the following comment.

\begin{remark}
	For bijective $\psi\in\mathscr{O}_M(\R)$ without fixed points, it either holds $\psi(x)>x$ for every $x\in\R$ or $\psi(x)<x$ for each $x\in\R$. While Theorem \ref{theorem: sufficient for hypercyclicity} deals with the first case, replacing in hypothesis \eqref{eq:hypothesis 1 on psi} \enquote{$\forall x\in (\beta_j,\infty)$} by \enquote{$\forall x\in (-\infty,\beta_j)$} and in hypothesis \eqref{eq:hypothesis 2 on psi} \enquote{$\forall x\in (-\infty,\alpha_j)$} by \enquote{$\forall x\in (\alpha_j,\infty)$} gives an analogous result for the case $\psi(x)<x, x\in\R$. Indeed, let $r(x)=-x$, $x\in\mathbb{R}$, be the reflection at the origin. Then, $C_r$ is bijective on $\mathscr{O}_M(\mathbb{R})$ with $C_r^2=\operatorname{id}_{\mathscr{O}_M(\mathbb{R})}$. Additionally, for $\psi\in\mathscr{O}_M(\mathbb{R})$ we set $\sigma(\psi)=-C_r(\psi)$ so that $\sigma(\sigma(\psi))=\psi$. Then, we have $C_\psi=C_r\circ C_{\sigma(\psi)}\circ C_r$, so that $C_\psi$ and $C_{\sigma(\psi)}$ are conjugate, in particular, $C_\psi$ is (sequentially) hypercyclic if and only if $C_{\sigma(\psi)}$ is. Obviously, $\psi(x)<x$ for every $x\in\mathbb{R}$ precisely when $\sigma(\psi)(x)>x$ for every $x\in\mathbb{R}$. Additionally, $\psi$ is bijective if and only if $\sigma(\psi)$ is bijective, and $\left(\sigma(\psi)\right)^{(j)}(x)=(-1)^{j-1}\psi^{(j)}(-x)=(-1)^j\sigma(\psi^{(j)})(x)$.
\end{remark}

\begin{proof}[Proof of Theorem \ref{theorem: sufficient for hypercyclicity}]
	We will explicitly construct a function $g\in\mathscr{O}_M(\mathbb{R})$ whose orbit under $C_\psi$ is sequentially dense in $\mathscr{O}_M(\mathbb{R})$. In order to do so, let $(p_n)_{n\in\mathbb{N}}$ be a sequence of compactly supported smooth functions on $\mathbb{R}$ such that $\{p_n: n\in\mathbb{N}\}$ is dense in $\mathscr{O}_M(\mathbb{R})$ and such that for every $m\in\mathbb{N}$ there are infinitely many $n\in\mathbb{N}$ with $p_n=p_m$.
	
	Since $\psi$ is bijective, without fixed points, and $\psi(x)>x$ for every $x$, the sequence $(\psi_n(x))_{n\in\mathbb{N}}$ is strictly increasing and tends to infinity while $(\psi_{-n}(x))_{n\in\mathbb{N}}$ is strictly decreasing with $\lim_{n\rightarrow\infty}\psi_{-n}(x)=-\infty$. In particular, for every compact subset $K$ of $\mathbb{R}$ there is $N\in\mathbb{N}$ such that neither $\psi_n(K)$ nor $\psi_{-n}(K)$ intersects $K$ whenever $n\geq N$.
	
	Next, we choose a strictly increasing sequence $(k_n)_{n\in\mathbb{N}}$ of nonnegative integers by the following recursive procedure. First we choose $k_1=0$. If $k_1,\ldots, k_n$ have already been chosen, let $k_{n+1}$ be strictly larger than $k_n$ such that the following conditions are satisfied.
	\begin{enumerate}
		\item[(a)] There exists $t\in\mathbb{R}$ such that the support of $p_n\circ\psi_{-k_n}$ is contained in $(-\infty, t)$ while the support of $p_{n+1}\circ\psi_{-k_{n+1}}$ is contained in $(t,\infty)$.
		\item[(b)] For $1\leq l\leq n$ and $0\leq j\leq n+1$ the support of $p_l\circ\psi_{k_{n+1}-k_l}$ is contained in $(-\infty,\min\{-n-1,\alpha_1,\ldots,\alpha_{n+1}\})$ and
		$\left|p_l^{(j)}\left(\psi_{k_{n+1}-k_l}(x)\right)\right|\leq |x|$ for every $x\in\mathbb{R}$.
		\item[(c)] For $1\leq l\leq n$ and $0\leq j\leq n+1$ the support of $p_{n+1}\circ\psi_{-(k_{n+1}-k_l)}$ is contained in $(\max\{n+1,\beta_1,\ldots,\beta_{n+1}\},\infty)$ and $\left|p_{n+1}^{(j)}\left(\psi_{-(k_{n+1}-k_l)}(x)\right)\right|\leq |x|$ for every $x\in\mathbb{R}$.
	\end{enumerate}
	It is clear that such a choice of $k_{n+1}$ is possible.
	
	From $(a)$ it follows that the functions $p_n\circ\psi_{-k_n}$, $n\in\mathbb{N}$, have pairwise disjoint supports so that by
	$$\forall\,x\in\mathbb{R}:\, g(x)=\sum_{n=1}^\infty p_n\left(\psi_{-k_n}(x)\right)$$
	a smooth function $g$ is defined on $\mathbb{R}$. Keeping in mind that for $x\in\mathbb{R}$ at most one of the defining summands of $g$ does not vanish at $x$, an application of condition $(c)$ for $l=1$ combined with hypothesis \eqref{eq:hypothesis 1 on psi} on $\psi$, and Fa\`{a} di Bruno's formula yields for any nonnegative integer $m$ the existence of $C>0$ and $t\in\mathbb{N}$ such that for $x\in\mathbb{R}$ and $0\leq j\leq m$
	$$|g^{(j)}(x)|\leq\max\left\{\left|\left(p_n\circ\psi_{-k_n}\right)^{(j)}(y)\right|; 1\leq n\leq m,\, y\in\mathbb{R}\right\}+C(1+|x|^2)^t|x|,$$
	so that $g\in\mathscr{O}_M(\mathbb{R})$.
	
	We claim that $\{C_\psi^n(g): n\in\mathbb{N}\}$ is sequentially dense in $\mathscr{O}_M(\mathbb{R})$. To prove this it is enough to show that the set $\{p_N: N\in \mathbb{N}\}$ is contained in the sequential closure of $\{C_\psi^n(g): n\in\mathbb{N}\}$. Thus, we fix $N\in\mathbb{N}$. Let $(s_i)_{i\in\mathbb{N}}$ be a strictly increasing sequence of nonnegative integers such that $p_N=p_{s_i}$, $i\in\mathbb{N}$. Then, for $i\in\mathbb{N}$, we have
	\begin{equation}\label{eq:proof of hypercyclicity 1}
		C_\psi^{k_{s_i}}(g)-p_N=\sum_{n=1}^{s_i-1} \left(p_n\circ\psi_{k_{s_i}-k_n}\right) + \sum_{n=s_i+1}^\infty\left(p_n\circ\psi_{-(k_n-k_{s_i})}\right).
	\end{equation}
	By condition $(b)$, for $n<s_i$, the support of $p_n\circ\psi_{k_{s_i}-k_n}$ is contained in $(-\infty, -s_i)$. Likewise, for $n>s_i$, condition $(c)$ ensures that the support of $p_n\circ\psi_{-(k_n-k_{s_i})}$ is contained in $(s_i,\infty)$. Hence, both sequences of functions
	\begin{equation}\label{eq:proof of hypercyclicity 2}
		\left(\sum_{n=1}^{s_i-1} \left(p_n\circ\psi_{k_{s_i}-k_n}\right)\right)_{i\in\mathbb{N}}\quad\text{and}\quad \left(\sum_{n=s_i+1}^\infty\left(p_n\circ\psi_{-(k_n-k_{s_i})}\right)\right)_{i\in\mathbb{N}}
	\end{equation}
	converge to zero in $C^\infty(\mathbb{R})$. Thus, Fact 1 combined with \eqref{eq:proof of hypercyclicity 1} will imply $p_N=\lim_{i\rightarrow\infty}C_\psi^{k_{s_i}}(g)$ in $\mathscr{O}_M(\mathbb{R})$ once we have shown that both sequences in \eqref{eq:proof of hypercyclicity 2} are bounded in $\mathscr{O}_M(\mathbb{R})$, thereby completing the proof.
	
	Since $p_n\circ\psi_{-k_n}$, $n\in\mathbb{N}$, have mutually disjoint supports, the summands of the first sequence in \eqref{eq:proof of hypercyclicity 2} have mutually disjoint supports as do the ones of the second sequence. The same arguments which we used to prove that $g$ belongs to $\mathscr{O}_M(\mathbb{R})$ yield that the second sequence in \eqref{eq:proof of hypercyclicity 2} is bounded in $\mathscr{O}_M(\mathbb{R})$. Refering to hypothesis \eqref{eq:hypothesis 2 on psi} and to condition $(b)$ instead of hypothesis \eqref{eq:hypothesis 1 on psi} and condition $(c)$, respectively, one shows mutatis mutandis that the first sequence in \eqref{eq:proof of hypercyclicity 2} is bounded in $\mathscr{O}_M(\mathbb{R})$, too.
\end{proof}

\begin{cor}\label{corollary: sufficient for hypercyclicity}
	Let $\psi\in\mathscr{O}_M(\R)$ be bijective, without fixed points and such that $\psi'(x)>0$ for every $x\in\R$ and such that $\{(\psi_n)':\,n\in\mathbb{Z}\}$ is bounded in $\mathscr{O}_M(\R)$. Then, the composition operator $C_\psi:\mathscr{O}_M(\R)\rightarrow\mathscr{O}_M(\R)$ is sequentially hypercyclic.
\end{cor}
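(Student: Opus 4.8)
The plan is to deduce the statement from Theorem \ref{theorem: sufficient for hypercyclicity}. Since $\psi$ is a bijection of $\R$ with $\psi'>0$ everywhere, it is a strictly increasing homeomorphism of $\R$ onto itself, and, having no fixed point, the continuous function $x\mapsto\psi(x)-x$ has constant sign; hence either $\psi(x)>x$ for all $x\in\R$ or $\psi(x)<x$ for all $x\in\R$. By the remark following Theorem \ref{theorem: sufficient for hypercyclicity}, in both cases it is enough to verify the growth conditions \eqref{eq:hypothesis 1 on psi} and \eqref{eq:hypothesis 2 on psi} for some $\alpha,\beta\in\R$ (with the half-lines replaced as indicated there in the case $\psi(x)<x$). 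I would therefore reduce everything to the single uniform estimate
\begin{equation}\label{eq: cor uniform bound}
\forall\, j\in\N\ \exists\, C_j>0,\, t_j\in\N\ \forall\, x\in\R,\, n\in\mathbb{Z}\colon\quad \bigl|(\psi_n)^{(j)}(x)\bigr|\le C_j(1+|x|^2)^{t_j},
\end{equation}
which, holding for all $x\in\R$ and for both signs of $n$, supplies at once all the required instances of \eqref{eq:hypothesis 1 on psi}--\eqref{eq:hypothesis 2 on psi}, say with $\alpha=\beta=0$.

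The heart of the matter is to extract \eqref{eq: cor uniform bound} from the hypothesis that $B:=\{(\psi_n)':n\in\mathbb{Z}\}$ is bounded in $\mathscr{O}_M(\R)$. Here I would use that $\mathscr{O}_M(\R)$ is the projective limit of the (LB)-spaces $\mathscr{O}^m(\R)=\ind_{n\rightarrow\infty}\mathscr{O}_n^m(\R)$: boundedness of $B$ in $\mathscr{O}_M(\R)$ amounts to boundedness of $B$ in each $\mathscr{O}^m(\R)$, and, by regularity of these weighted (LB)-spaces, $B$ is then contained in and bounded in some Banach step $\mathscr{O}_{n(m)}^m(\R)$, i.e.\ $\sup_{f\in B}|f|_{m,n(m)}<\infty$. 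Unravelling this for each $m\in\N$ and recalling that every element of $B$ is of the form $(\psi_n)'$ gives a constant $C(m)$ with $|(\psi_n)^{(j+1)}(x)|\le C(m)(1+|x|^2)^{n(m)}$ for all $0\le j\le m$, all $n\in\mathbb{Z}$ and all $x\in\R$; letting $m$ range over $\N$ yields \eqref{eq: cor uniform bound}. (Equivalently, one may invoke the standard description of the bounded subsets of $\mathscr{O}_M(\R)$ as precisely those sets all of whose derivatives admit a uniform polynomial bound.) With \eqref{eq: cor uniform bound} in hand, Theorem \ref{theorem: sufficient for hypercyclicity}, in its original form when $\psi(x)>x$ and in the reflected form described after its statement when $\psi(x)<x$, shows that $C_\psi$ is sequentially hypercyclic.

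I expect the only genuine obstacle to be the purely topological fact used above --- that a bounded subset of $\mathscr{O}_M(\R)$ has all of its derivatives uniformly polynomially bounded. This is independent of the dynamics, and I would either quote it from the literature on the space of multipliers of $\mathscr{S}(\R)$ (or deduce it from the regularity of the (LB)-spaces $\mathscr{O}^m(\R)$), or, if a self-contained argument is desired, derive it directly by testing the seminorms $p_{m,v}$ against suitably chosen Schwartz weights $v$. The remaining ingredients --- the dichotomy $\psi(x)>x$ versus $\psi(x)<x$, matching \eqref{eq: cor uniform bound} to the quantifier pattern of the hypotheses of Theorem \ref{theorem: sufficient for hypercyclicity}, and the final appeal to that theorem --- are routine.
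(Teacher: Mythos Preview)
Your proposal is correct and follows essentially the same route as the paper: establish the dichotomy $\psi(x)>x$ versus $\psi(x)<x$, invoke the reflection remark after Theorem~\ref{theorem: sufficient for hypercyclicity}, and apply that theorem. The paper's proof is a single sentence (``follows immediately from Theorem~\ref{theorem: sufficient for hypercyclicity} and the comment preceding its proof''), leaving implicit precisely the step you spell out --- that boundedness of $\{(\psi_n)':n\in\mathbb{Z}\}$ in $\mathscr{O}_M(\R)$ yields uniform polynomial bounds on all derivatives $(\psi_n)^{(j)}$, $j\ge 1$, via the regularity of the steps $\mathscr{O}^m(\R)$ --- so your version is simply a more detailed rendering of the same argument.
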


\begin{proof}
	Since either $\psi(x)>x$ for every $x\in\R$ or $\psi(x)<x$ for every $x\in\R$ the assertion follows immediately from Theorem \ref{theorem: sufficient for hypercyclicity} and the comment preceding its proof.
\end{proof}

\begin{cor}\label{corollary: chaos of shifts}
	For $\beta\in\mathbb{R}\backslash\{0\}$ and $\psi(x)=x+\beta$ the composition operator $C_\psi$ is sequentially hypercyclic on $\mathscr{O}_M(\mathbb{R})$. Additionally, $C_\psi$ is chaotic.
\end{cor}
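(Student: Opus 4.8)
The plan is to treat the two assertions in turn. For the sequential hypercyclicity I would simply verify that $\psi(x)=x+\beta$ satisfies the hypotheses of Corollary \ref{corollary: sufficient for hypercyclicity}: it is a smooth bijection of $\R$ lying in $\mathscr{O}_M(\R)$, it has no fixed points because $\beta\neq 0$, and $\psi'\equiv 1>0$; moreover $\psi_n(x)=x+n\beta$ for every $n\in\mathbb{Z}$, so $(\psi_n)'\equiv 1$ for all $n$ and hence $\{(\psi_n)':\,n\in\mathbb{Z}\}$ reduces to the single constant function $1$, which is trivially a bounded subset of $\mathscr{O}_M(\R)$. Thus Corollary \ref{corollary: sufficient for hypercyclicity} immediately yields sequential hypercyclicity of $C_\psi$; in particular $C_\psi$ is hypercyclic and therefore topologically transitive.

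For chaos it then remains to show that the set of periodic points of $C_\psi$ is dense in $\mathscr{O}_M(\R)$. First I would identify these points: for $f\in\mathscr{O}_M(\R)$ the identity $C_\psi^N f=f$ means $f(x+N\beta)=f(x)$ for all $x$, so the periodic points of $C_\psi$ are precisely the smooth functions on $\R$ which are periodic with some period $N|\beta|$, $N\in\N$; conversely any such function is bounded together with all its derivatives and hence automatically belongs to $\mathscr{O}_M(\R)$. So the task becomes to prove that the union over $N\in\N$ of the spaces of smooth $N|\beta|$-periodic functions is dense in $\mathscr{O}_M(\R)$.

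To that end I would use the fact, already invoked in the proof of Theorem \ref{theorem: sufficient for hypercyclicity}, that the compactly supported smooth functions are dense in $\mathscr{O}_M(\R)$. It is then enough to approximate a given $p\in C^\infty(\R)$ with $\operatorname{supp}p\subseteq[-R,R]$, in an arbitrary seminorm $p_{m,v}$, by a periodic function. The natural candidate is the periodization $q=\sum_{k\in\mathbb{Z}}p(\,\cdot\,+kN\beta)$ with $N$ chosen so large that $N|\beta|>2R$: the sum is then locally finite, so $q$ is a smooth $N|\beta|$-periodic function, i.e.\ a periodic point of $C_\psi$. On $[-R,R]$ one has $q=p$, so $p-q=-\sum_{k\neq 0}p(\,\cdot\,+kN\beta)$ is supported in $\{|x|\geq N|\beta|-R\}$ and there it consists of disjoint translates of $-p$. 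Consequently $p_{m,v}(p-q)\leq\bigl(\max_{0\le j\le m}\sup_{x\in\R}|p^{(j)}(x)|\bigr)\cdot\sup_{|y|\geq N|\beta|-R}|v(y)|$, and the right-hand side tends to $0$ as $N\to\infty$ since $v\in\mathscr{S}(\R)$ vanishes at infinity. Choosing $N$ large enough makes it smaller than any prescribed $\varepsilon$, which proves that the set of periodic points is dense in $\mathscr{O}_M(\R)$; together with the hypercyclicity of $C_\psi$ established above, this shows that $C_\psi$ is chaotic in the sense of \cite{MR2919812}.

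I do not anticipate a real obstacle. The first part is a one-line check, and the only slightly delicate point is the seminorm estimate for the periodization error. It works precisely because the seminorms defining the topology of $\mathscr{O}_M(\R)$ are weighted by rapidly decreasing functions: shifting the unwanted copies of $p$ far out to infinity makes them negligible in every $p_{m,v}$, and this is exactly the mechanism that renders the (relatively small) space of periodic functions dense in the much larger space $\mathscr{O}_M(\R)$.
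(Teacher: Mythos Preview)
Your proof is correct and follows the same route as the paper: sequential hypercyclicity is obtained directly from Corollary~\ref{corollary: sufficient for hypercyclicity}, and chaos follows once one knows that $C_\psi$ has a dense set of periodic points. The paper simply asserts the latter as ``clear,'' whereas you supply the explicit periodization argument showing that smooth $N|\beta|$-periodic functions approximate compactly supported ones in every seminorm $p_{m,v}$; this added detail is sound and fills in precisely what the paper leaves to the reader.
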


\begin{proof}
	The sequential hypercyclicity of $C_\psi$ follows immediately from Corollary \ref{corollary: sufficient for hypercyclicity}. Additionally, considering the set $\{\sum_{n\in\mathbb{Z}}g(\cdot+ nl k_g \beta); g\in \mathscr{D}(\R), l\in\mathbb{N}\}$, where $k_g\in\N$ is chosen in such a way that $[\min\text{supp}\,g,\max\text{supp}\,g ]$ and $[k_g+\min\text{supp}\,g,k_g+\max\text{supp}\,g ]$ are disjoint. Then, by standard arguments, this set is dense in $\mathscr{O}_M(\R)$ and consists of periodic points for $C_\psi$. Thus, $C_\psi$ is chaotic.
\end{proof}

\section{Mixing composition operators}\label{section: mixing}
In this section we characterize mixing operators $C_\psi$ acting on 
$\mathscr{O}_M(\R)$ in terms of their symbol $\psi$.

\quad

\begin{theorem}
	\label{thm: mixing_bij}
	Let $\psi\in \mathscr{O}_M(\R)$ be surjective. Then, the  following conditions are equivalent.
	\begin{enumerate}[label=(\roman*)]
		\item The operator $C_\psi\colon\mathscr{O}_M(\mathbb{R})\to \mathscr{O}_M(\mathbb{R})$, $f\mapsto f \circ \psi$ is mixing.
		\item $\psi$ is injective with a non-vanishing derivative and without fixed points such that for every $a\in\R$ and each $k\in\mathbb{N}$, for arbitrary $v\in \mathscr{S}(\mathbb{R})$ it holds
		\[
		\lim_{n\to\infty} \sup_{x\in\psi_{-n}\left([\min\{a,\psi(a)\},\max\{a,\psi(a)\}]\right)}
		\left|v(x)(\psi_n)^{(k)}(x)\right|=0
		\]
		and
		\[
		\lim_{n\to\infty} \sup_{x\in\psi_{n}\left([\min\{a,\psi(a)\},\max\{a,\psi(a)\}]\right)}
		\left|v(x)(\psi_{-n})^{(k)}(x)\right|=0.
		\] 
		\item $\psi$ is injective with a non-vanishing derivative and without fixed points, and there are $a,b\in\R$ such that for every $k\in\mathbb{N}$ and $v\in \mathscr{S}(\mathbb{R})$ we have
		\[
		\lim_{n\to\infty} \sup_{x\in\psi_{-n}\left([\min\{a,\psi(a)\},\max\{a,\psi(a)\}]\right)}
		\left|v(x)(\psi_n)^{(k)}(x)\right|=0
		\]
		and
		\[
		\lim_{n\to\infty} \sup_{x\in\psi_{n}\left([\min\{b,\psi(b)\},\max\{b,\psi(b)\}]\right)}
		\left|v(x)(\psi_{-n})^{(k)}(x)\right|=0.
		\] 
	\end{enumerate}
\end{theorem}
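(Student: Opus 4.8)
\noindent\emph{Proof plan.} I would prove the cycle $(i)\Rightarrow(ii)\Rightarrow(iii)\Rightarrow(i)$; here $(ii)\Rightarrow(iii)$ is trivial (take any $a$ and put $b=a$). First some normalisations. If $\psi$ has a fixed point then all three statements fail — $(i)$ because a mixing operator is topologically transitive, so Proposition~\ref{proposition: necessary for transitivity} applies, and $(ii)$, $(iii)$ by hypothesis — so I may assume $\psi$ is fixed-point free; under each statement $\psi$ is then an increasing diffeomorphism of $\R$ (under $(i)$ by Proposition~\ref{proposition: necessary for transitivity} together with surjectivity), hence either $\psi(x)>x$ for all $x$ or $\psi(x)<x$ for all $x$. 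Replacing $\psi$ by $\sigma(\psi)$ as in the comment preceding the proof of Theorem~\ref{theorem: sufficient for hypercyclicity} — a genuine conjugacy, hence preserving mixing, which carries each seminorm condition of $(ii)$, $(iii)$ to one of the same shape and reverses the sign of $\psi(x)-x$ — I would assume throughout that $\psi(x)>x$, so $\psi_n(x)\to+\infty$ and $\psi_{-n}(x)\to-\infty$ monotonically. I would \emph{not} assume $C_\psi$ invertible (it need not be): only $C_\psi^n=C_{\psi_n}$ for $n\ge0$ is used, together with the observation that for $f\in C_c^\infty(\R)$ the function $f\circ\psi_{-n}$ still has compact support, hence lies in $\mathscr O_M(\R)$, and $C_\psi^n(f\circ\psi_{-n})=f$. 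Write $I_a:=[\min\{a,\psi(a)\},\max\{a,\psi(a)\}]=[a,\psi(a)]$; the structural facts exploited throughout are that $\psi_n\circ\psi_{-n}=\mathrm{id}$ (so $\psi_n$ carries the moving interval $\psi_{-n}(I_a)$ onto the \emph{fixed} compact $I_a$, and $\psi_n=\psi_l\circ\psi_{n-l}$) and that the intervals $\psi_l(I_a)=[\psi_l(a),\psi_{l+1}(a)]$, $l\in\mathbb Z$, cover $\R$. Finally two technical ingredients I would establish/invoke up front: (A) for each $v\in\mathscr S(\R)$ and $N\in\N$ there is $w\in\mathscr S(\R)$ with $|v|\le|w|^N$ pointwise (pass to the radial nonincreasing majorant of $|v|$, take an $N$-th root, mollify); and (B) in Fa\`{a} di Bruno's expansion of $f\circ\phi$ the multi-index $(0,\dots,0,1)$ yields exactly $f'(\phi)\,\phi^{(k)}$, while every other term carries $f^{(\ell)}(\phi)$ with $\ell\ge2$ and only derivatives of $\phi$ of order $\le k-1$ — so a term $|v(x)|\prod_r|\phi^{(r)}(x)|^{i_r}$ with $\sum_r i_r\ge2$, evaluated on a set $\psi_{-m}(I_a)$ or $\psi_m(I_a)$, can be pushed to $0$ by splitting $v=w_1\cdots w_{\sum i_r}$ via (A).

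\noindent For $(i)\Rightarrow(ii)$ — the heart of the matter — injectivity, $\psi'>0$ and fixed-point freeness are Proposition~\ref{proposition: necessary for transitivity} plus surjectivity. For the remaining conditions, fix $a$, $k\in\N$, $v\in\mathscr S(\R)$, put $m:=k$, choose $h\in C_c^\infty(\R)$ equal to the identity on a neighbourhood $J$ of $I_a$, set $c_0:=\min_{\overline J}e^{-x^2}>0$ and fix $\delta\in(0,c_0/2)$. For the first limit: for $\varepsilon>0$ I would apply mixing to $U=\{g:p_{m,e^{-x^2}}(g-h)<\delta\}$ and the $0$-neighbourhood $V=\{g:p_{m,v}(g)<\varepsilon\}$, obtaining for large $n$ a $g_n\in U$ with $g_n\circ\psi_n\in V$; on $J$ then $g_n'>\tfrac12$ and $|g_n^{(\ell)}|<\delta/c_0$ for $2\le\ell\le m$, and since $\psi_n(\psi_{-n}(I_a))=I_a\subseteq J$, expanding $(g_n\circ\psi_n)^{(k)}$ on $\psi_{-n}(I_a)$ by (B) gives there $\tfrac12|v(x)(\psi_n)^{(k)}(x)|<\varepsilon+\tfrac{\delta}{c_0}R_n(x)$, with $R_n$ a finite sum of terms $|v(x)|\prod_r|(\psi_n)^{(r)}(x)|^{i_r}$, $\sum_r i_r\ge2$, $\sum_r r i_r=k$. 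For $k=1$, $R_n\equiv0$ and $\varepsilon\downarrow0$ gives the limit; for $k\ge2$ the $R_n$-terms involve only orders $\le k-1$, so $\sup_{\psi_{-n}(I_a)}R_n\to0$ by the inductive hypothesis and (A), and $\varepsilon\downarrow0$ concludes. For the second limit I would argue symmetrically with the compositions reversed: apply mixing to $U=\{g:p_{m,v}(g)<\delta\}$ and $V=\{g:p_{m,e^{-x^2}}(g-h)<\delta\}$ to get $f_n\in U$ with $f_n\circ\psi_n\in V$; then $\phi_n:=f_n-h\circ\psi_{-n}\in\mathscr O_M(\R)$ has $\phi_n\circ\psi_n=f_n\circ\psi_n-h$, so $|(\phi_n\circ\psi_n)^{(\ell)}|<\delta/c_0$ on $J$ for $\ell\le m$, and since $h\circ\psi_{-n}=\psi_{-n}$ on $\psi_n(J)\supseteq\psi_n(I_a)$, writing $\phi_n=(\phi_n\circ\psi_n)\circ\psi_{-n}$, expanding $\phi_n^{(k)}$ on $\psi_n(I_a)$ by (B), and using $(\psi_{-n})^{(k)}=f_n^{(k)}-\phi_n^{(k)}$ there with $|v(x)f_n^{(k)}(x)|<\delta$ yields, after rearrangement, $\tfrac12|v(x)(\psi_{-n})^{(k)}(x)|<\delta+\tfrac{\delta}{c_0}(\text{lower-order cross terms})$ on $\psi_n(I_a)$; the same induction and (A) give the second limit for every $a$.

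\noindent For $(iii)\Rightarrow(ii)$ I would fix an arbitrary $a'$, cover $I_{a'}\subseteq\bigcup_{l=p}^{q-1}\psi_l(I_a)$ so that $\psi_{-n}(I_{a'})\subseteq\bigcup_{l}\psi_{-(n-l)}(I_a)$, and on the $l$-th piece write $\psi_n=\psi_l\circ\psi_{n-l}$ and expand $(\psi_n)^{(k)}$ by Fa\`{a} di Bruno, using that the derivatives of the fixed $\psi_l$ are bounded on the fixed compact $I_a$; this reduces $|v(x)(\psi_n)^{(k)}(x)|$ on $\psi_{-(n-l)}(I_a)$ to a finite sum of terms $|v(x)|\prod_r|(\psi_{n-l})^{(r)}(x)|^{i_r}$, each $\to0$ directly by $(iii)$ (a single derivative) or by $(iii)$ and (A) (otherwise); so the first limit holds at $a'$, and symmetrically (starting from $b$) the second holds at every $a'$. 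Finally, for $(ii)\Rightarrow(i)$: $C_c^\infty(\R)$ is dense in $\mathscr O_M(\R)$ (approximate by cut-offs, Schwartz weights absorbing the polynomial growth of $\mathscr O_M(\R)$-functions); with $D:=C_c^\infty(\R)$ and $S_n f:=f\circ\psi_{-n}$ (well defined on $D$, and $C_\psi^n S_n=\mathrm{id}$ on $D$) the mixing criterion reduces everything to showing $f\circ\psi_n\to0$ and $f\circ\psi_{-n}\to0$ in $\mathscr O_M(\R)$ for $f\in D$, i.e.\ $p_{m,v}(f\circ\psi_{\pm n})\to0$ for all $m,v$ — and this is again the $(iii)\Rightarrow(ii)$ computation (cover $\operatorname{supp}f$ by finitely many $\psi_l(I_a)$, Fa\`{a} di Bruno, $(ii)$ and (A)), the lone term $v\cdot(f\circ\psi_{\pm n})$ carrying no derivative of $\psi_{\pm n}$ going to $0$ because $\operatorname{supp}(f\circ\psi_{\pm n})$ escapes to $\mp\infty$ where $v$ vanishes. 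I expect the main obstacle to be $(i)\Rightarrow(ii)$: the non-obvious step is to test mixing against a symbol that is the identity on a \emph{fixed} compact — legitimate only because $\psi_n\circ\psi_{-n}=\mathrm{id}$ — so that the leading Fa\`{a} di Bruno term reproduces $(\psi_{\pm n})^{(k)}$; the recurring technical annoyance everywhere is the control of the Fa\`{a} di Bruno cross terms, uniformly dispatched by the majorization lemma (A).
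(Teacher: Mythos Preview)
Your argument is correct and matches the paper's strategy throughout: Kitai's criterion on $C_c^\infty(\R)$ for the sufficiency direction, and induction on $k$ via Fa\`a di Bruno for $(i)\Rightarrow(ii)$, with your majorization lemma~(A) standing in for the paper's use of Voigt's factorization $v=w_1\cdots w_s$ in $\mathscr S(\R)$. Two small remarks: for the second limit in $(i)\Rightarrow(ii)$ the paper reads mixing as $C_\psi^n(V)\cap U\neq\emptyset$ --- equivalently, there is $f\in U$ with $f\circ\psi_{-n}\in V$ --- and then the bound $|f'|>1$ on $I_a$ gives $|v(\psi_{-n})^{(k)}|\le|v\,f'(\psi_{-n})(\psi_{-n})^{(k)}|$ directly, so your auxiliary $\phi_n=f_n-h\circ\psi_{-n}$ can be avoided; and in that same step your displayed inequality yields only $\limsup\le 2\delta$, so you must close by letting $\delta\downarrow0$ exactly as you let $\varepsilon\downarrow0$ for the first limit.
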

\begin{proof} $ $\\
	Clearly, $(ii)$ implies $(iii)$.\\
	$(iii)\Rightarrow (i)$ Since $\psi$ does not have a fixed point, we have either $\psi(x)>x$ for all $x\in\R$, or $\psi(x)<x$ for all $x\in\R$. We only consider the case $\psi(x)>x$, the other case is treated, mutatis mutandis, with the same arguments. Thus, $\min\{a,\psi(a)\}=a, \max\{a,\psi(a)\}=\psi(a)$ and $\min\{b,\psi(b)\}=b, \max\{b,\psi(b)\}=\psi(b)$.
	
	Taking into account that $\lim_{n\rightarrow\infty}\psi_n(a)=\infty$ and $\lim_{n\rightarrow\infty}\psi_{-n}(a)=-\infty$ we have $\R=\cup_{m\in\mathbb{Z}}\left(\psi_m(a),\psi_{m+2}(a)\right)$
	and the sequence of open intervals $\left(\psi_m(a),\psi_{m+2}(a)\right)_{m\in\mathbb{Z}}$ is a locally finite cover of $\R$.
	Let $(\phi_m)_{m\in\mathbb{Z}}$ be a partition of unity on $\R$ subordinate to it.
	Likewise,
	let $(\eta_m)_{m\in\mathbb{Z}}$ be a partition of unity on $\R$ subordinate to the locally finite cover of $\R$ by the sequence of open intervals $\left((\psi_m(b),\psi_{m+2}(b))\right)_{m\in\mathbb{Z}}$.
	
	Since compactly supported functions are dense in 
	$\mathscr{O}_M(\R)$, in view of Kitai's criterion (see \cite[Thm. 12.31]{MR2919812}), it is enough to show that
	for every compactly supported smooth function $f$
	the sequences $(f\circ\psi_n)_{n\in\N}$ and $(f\circ\psi_{-n})_{n\in\N}$
	converge to zero in $\mathscr{O}_M(\R)$. Note that with $f$ also $f\circ\psi_{-n}$ is a compactly supported smooth function and thus belongs to $\mathscr{O}_M(\R)$. When considering $(f\circ\psi_n)_{n\in\N}$, respectively $(f\circ\psi_{-n})_{n\in\N}$, we may replace $f$ by $\phi_m f$ and $\eta_m f$, respectively, so that without loss of generality $\text{supp}\,f\subset (\psi_m(a),\psi_{m+2}(a))$ and $\text{supp}\,f\subset (\psi_m(b),\psi_{m+2}(b))$, respectively.
	
	We will show that the sequence $(f\circ\psi_n)_{n\in \N}$
	tends to zero in $\mathscr{O}_M(\R)$. To do this, let us fix 
	$v\in\mathscr{S}(\R)$ and $k\geq 0$ and we observe
	\begin{eqnarray}\label{eq: mixing 1}
		\sup_{x\in\mathbb{R}}\left|v(x)(f\circ\psi_n)^{(k)}(x)\right|&=&\sup_{x\in [\psi_{m-n}(a),\psi_{m+2-n}(a)]}\left|v(x)(f\circ\psi_n)^{(k)}(x)\right|\nonumber\\
		&\leq&\sup_{x\in\psi_{-(n-m)}\left( [a,\psi(a)]\right)}\left|v(x)\left((f\circ\psi_m)\circ\psi_{n-m}\right)^{(k)}(x)\right|\\
		&&\quad +\sup_{x\in\psi_{-(n-m-1)}\left( [a,\psi(a)]\right)}\left|v(x)\left((f\circ\psi_{m+1})\circ\psi_{n-m-1}\right)^{(k)}(x)\right|.\nonumber
	\end{eqnarray}
	Setting $g=f\circ\psi_m$, in case of $k\geq 1$, for the first summand of the above right hand side, we conclude with Fa\`{a} di Bruno's formula
	\begin{eqnarray*}
		&&\sup_{x\in\psi_{-(n-m)}\left( [a,\psi(a)]\right)}\left|v(x)\left((f\circ\psi_m)\circ\psi_{n-m}\right)^{(k)}(x)\right|\\
		&=&\sup_{x\in\psi_{-(n-m)}\left( [a,\psi(a)]\right)}\left|\sum_{\substack{i_1,i_2,\ldots, i_k\geq 0 \\ i_1+\cdots +ki_k=k}}\frac{k!}{i_1!\cdots i_k!} g^{(i_1+\cdots+i_k)}\left(\psi_{n-m}(x)\right)v(x)\prod_{r=1}^k\left(\frac{\psi_{n-m}^{(r)}(x)}{r!}\right)^{i_r}\right|\\
		&\leq&\max_{\substack{y\in[a,\psi(a)],\\ 0\leq j\leq k}}|g^{(j)}(y)|\sum_{\substack{i_1,i_2,\ldots, i_k\geq 0 \\ i_1+\cdots +ki_k=k}}\frac{k!}{i_1!\cdots i_k!} \sup_{x\in\psi_{-(n-m)}\left( [a,\psi(a)]\right)}\left|v(x)\prod_{r=1}^k\left(\frac{\psi_{n-m}^{(r)}(x)}{r!}\right)^{i_r}\right|,
	\end{eqnarray*}
	which, by the hypotheses on $\psi$ combined with the fact for all $s\in\mathbb{N}$ the function $v\in\mathscr{S}(\R)$ can be written as a product of $s$ functions from $\mathscr{S}(\mathbb{R})$ (see \cite{MR0741451}), tends to zero as $n$ goes to infinity. In case of $k=0$ it follows
	\begin{eqnarray*}
		&&\sup_{x\in\psi_{-(n-m)}\left( [a,\psi(a)]\right)}\left|v(x)\left((f\circ\psi_m)\circ\psi_{n-m}\right)(x)\right|\\
		&\leq&\max_{y\in[a,\psi(a)]}|g(y)|\sup_{x\in\psi_{-(n-m)}\left( [a,\psi(a)]\right)}\left|v(x)\right|,
	\end{eqnarray*}
	which clearly converges to zero as $n$ goes to infinity since $v\in\mathscr{S}(\R)$. 
	
	In the same way one proves that the second summand in the right hand side of \eqref{eq: mixing 1} converges to zero when $n$ tends to infinity which implies
	\[
	\lim_{n\to \infty} \sup_{x\in\mathbb{R}}\left|v(x)(f\circ\psi_n)^{(k)}(x)\right|=0
	\] 
	for every $k\geq 0$, i.e.\ $(f\circ\psi_n)_{n\in \N}$
	converges to zero in $\mathscr{O}_M(\R)$. That $(f\circ\psi_{-n})_{n\in \N}$
	converges to zero in $\mathscr{O}_M(\R)$, too, is proved along the same lines.
	\\
	$(i)\Rightarrow (ii)$ That $\psi$ is injective with a non-vanishing derivative and without fixed points follows from Proposition \ref{proposition: necessary for transitivity}. In order to prove the rest of the properties from $(ii)$, let $a\in\R$ be arbitrary. We proceed by induction with respect to $k$. In what follows, we consider only the case $\psi(x)>x$ for every $x\in\R$. In case of $\psi(x)<x$ for every $x\in\R$, one only has to replace $[a,\psi(a)]$ by $[\psi(a),a]$ in the arguments below.
	Let $v\in \mathscr{S}(\R)$ and $\varepsilon>0$ be arbitrary.
	The sets
	\[
	U=\{f\in \mathscr{O}_M(\R):|f'(x)|>1 \text{ for }x\in [a,\psi(a)]\}
	\]
	and
	\[
	V=\{f\in \mathscr{O}_M(\R): \sup_{x\in\mathbb{R}}
	\left|v(x)f'(x)\right|<\varepsilon\}
	\]
	are non-empty and  open in $\mathscr{O}_M(\R)$. Since $C_\psi$ is mixing, there exists  $N\in\mathbb{N}$ with 
	\[
	C_\psi^n(U)\cap V\not=\emptyset
	\quad\text{and}\quad C_\psi^n(V)\cap U\not=\emptyset
	\quad\text{for every}\quad n\geq N.
	\]
	Let $n\geq N$. There are $f,g\in U$ with 
	$f\circ\psi_n\in V$ and  $g\circ\psi_{-n}\in V$. 
	We have
	\begin{align*}
		\sup_{x\in\psi_{-n}\left([a,\psi(a)]\right)}
		\left|v(x)\psi_n'(x)\right|
		\leq  &  
		\sup_{x\in\psi_{-n}\left([a,\psi(a)]\right)}
		\left|v(x)f'(\psi_n(x))\psi_n'(x)\right| 
		<\varepsilon
	\end{align*}
	and
	\begin{align*}
		\sup_{x\in\psi_{n}\left([a,\psi(a)]\right)}
		\left|v(x)\psi_{-n}'(x)\right|
		\leq  &  
		\sup_{x\in\psi_{n}\left([a,\psi(a)]\right)}
		\left|v(x)g'(\psi_{-n}(x))\psi_{-n}'(x)\right|
		<\varepsilon.
	\end{align*}
	This shows that the condition in $(ii)$ holds for $k=1$.
	
	Assume now that the condition in $(ii)$ holds 
	up to  $k-1$. To finish the induction, for arbitrary $v\in \mathscr{S}(\mathbb{R})$ we have to show
	\[
	\lim_{n\to\infty} \sup_{x\in\psi_{-n}\left([a,\psi(a)]\right)}
	\left|v(x)(\psi_n)^{(k)}(x)\right|=0
	\text{ and }
	\lim_{n\to\infty} \sup_{x\in\psi_{n}\left([a,\psi(a)]\right)}
	\left|v(x)(\psi_{-n})^{(k)}(x)\right|=0.
	\] 
	We will show the second assertion, the first is proved in a similar way.
	
	Let $\varepsilon>0$ be arbitrary,
	\begin{equation*}
		U=\left\lbrace f\in \mathscr{O}_M(\R): 1<\left|f^{(l)}(x)\right|<M \textrm{ for  } x\in [a,\psi(a)],~   0\leq l\leq k\right\rbrace, 
	\end{equation*}
	where
	\[
	M=2\max_{0\leq i\leq k}\frac{(k+1)!(\psi(a)-a+2)^{k+1-i}}{(k+1-i)!}, 
	\]
	and 
	\[V=\left\lbrace
	f\in \mathscr{O}_M(\R):\sup_{x\in\R} \left|
	v(x) f^{(k)}(x)\right|<\frac{\varepsilon}{2}\right\rbrace.\]
	It is clear that $U$ and $V$ are open and non-empty (the polynomial $(x-a+2)^{k+1}$ is in  $U$). 
	Since $C_\psi$ is mixing, there is $N$ such that for all $n\geq N$ we have $C_\psi^n(V)\cap U\not=\emptyset$. Let $n\geq N$. There is 
	$f\in U$ with $f\circ\psi_{-n}\in V$.
	Because $f\in U$, 
	\begin{align*}
		\sup_{x\in\psi_{n}\left([a,\psi(a)]\right)}
		\left|v(x)(\psi_{-n})^{(k)}(x)\right|
		\leq  &  
		\sup_{x\in\psi_{n}\left([a,\psi(a)]\right)}
		\left|v(x)f'(\psi_{-n}(x))(\psi_{-n})^{(k)}(x)\right|
	\end{align*}
	and, by Fa\`{a} di Bruno's formula,
	\begin{align*}
		\sup_{x\in\psi_{n}\left([a,\psi(a)]\right)}&
		\left|v(x)\cdot \left(f'(\psi_{-n}(x))(\psi_{-n})^{(k)}(x)- 
		(f\circ \psi_{-n})^{(k)}(x)\right)\right|\\
		\leq   \sup_{x\in\psi_{n}\left([a,\psi(a)]\right)}&
		|v(x)|\cdot \sum_{\substack{i_1,i_2,\ldots, i_{k-1}\geq 0 \\ i_1+2i_2+\cdots +(k-1)i_{k-1}=k}}
		\frac{k!\cdot M}{i_1!i_2!\cdots i_{k-1}!}\cdot
		\prod_{j=1}^{k-1}\left(\frac{\left|\psi_{-n}^{(j)}(x)\right|}{j!}\right)^{i_j}.
	\end{align*}
	
	Since for all $s\in\mathbb{N}$ every function from $\mathscr{S}(\mathbb{R})$  can be written as a product of $s$ functions from 
	$\mathscr{S}(\mathbb{R})$ (see \cite{MR0741451}), the above and the inductive hypothesis imply that 
	for $n$ large enough 
	\begin{equation*}
		\sup_{x\in\psi_{n}\left([a,\psi(a)]\right)}
		\left|v(x)\cdot \left(f'(\psi_{-n}(x))(\psi_{-n})^{(k)}(x)- 
		(f\circ \psi_{-n})^{(k)}(x)\right)\right|<\frac{\varepsilon}{2}.
	\end{equation*}
	For $n$ large enough,
	because  $f\circ\psi_{-n}\in V$, we have
	\begin{align*}
		\sup_{x\in\psi_{n}\left([a,\psi(a)]\right)}
		\left|v(x)(f\circ\psi_{-n})^{(k)}(x)\right|<\frac{\varepsilon}{2}.
	\end{align*}
	Altogether the above shows for large enough $n$
	$$
	\sup_{x\in\psi_{n}\left([a,\psi(a)]\right)}
	\left|v(x)(\psi_{-n})^{(k)}(x)\right|<\varepsilon
	$$
	which completes the proof.
\end{proof}

Combining Corollary \ref{theorem: sufficient for hypercyclicity} with Theorem \ref{thm: mixing_bij} we obtain the following result.

\quad

\begin{cor}\label{corollary: sufficient for hypercyclicity and mixing form bijective symbol}
	Let $\psi\in \mathscr{O}_M(\R)$ be a bijective function with a non-vanishing derivative and without fixed points such that $\{(\psi_n)':\,n\in\mathbb{Z}\}$ is bounded in $\mathscr{O}_M(\R)$. Then, the composition operator $C_\psi:\mathscr{O}_M(\R)\rightarrow\mathscr{O}_M(\R)$ is sequentially hypercyclic and mixing.
\end{cor}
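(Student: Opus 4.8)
The plan is to read off both conclusions from results already proved in the excerpt: sequential hypercyclicity from Theorem~\ref{theorem: sufficient for hypercyclicity} (in the form of Corollary~\ref{corollary: sufficient for hypercyclicity}), and the mixing property from the characterisation in Theorem~\ref{thm: mixing_bij}. For the first part I would begin by noting that, since $\psi$ is a bijection of $\R$ with nowhere vanishing derivative, $\psi'$ has constant sign; if it were everywhere negative then $\psi$ would be a strictly decreasing bijection of $\R$, and $x\mapsto\psi(x)-x$ would change sign, producing a fixed point — contrary to hypothesis. Hence $\psi'(x)>0$ for all $x\in\R$, so $\psi$ meets all the assumptions of Corollary~\ref{corollary: sufficient for hypercyclicity}, and $C_\psi$ is sequentially hypercyclic. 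It then remains to prove that $C_\psi$ is mixing.

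For this I would verify condition $(iii)$ of Theorem~\ref{thm: mixing_bij}. The symbol $\psi$ is surjective (being bijective), and injective with non-vanishing derivative and without fixed points by hypothesis, so only the two limit conditions are at issue. Being fixed point free, $\psi$ satisfies either $\psi(x)>x$ for all $x$ or $\psi(x)<x$ for all $x$; I treat the first case, the second being identical after reversing the orientation of the intervals $[\min\{a,\psi(a)\},\max\{a,\psi(a)\}]$ (alternatively it follows by conjugating with the reflection $\sigma$ from the comment preceding the proof of Theorem~\ref{theorem: sufficient for hypercyclicity}). Take $a=b=0$; since $\psi(x)>x$ the relevant interval is $[0,\psi(0)]$, the map $\psi_{-n}$ is increasing with $\psi_{-n}(0)\to-\infty$, and $\psi_n(0)\to+\infty$. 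The decisive observation is that the hypothesis \enquote{$\{(\psi_n)':n\in\mathbb{Z}\}$ is bounded in $\mathscr{O}_M(\R)$} says exactly that $\sup_{n\in\mathbb{Z}}p_{m,w}\bigl((\psi_n)'\bigr)<\infty$ for every $m\ge 0$ and every $w\in\mathscr{S}(\R)$; applying this with $m=k-1$ yields, for each $k\in\mathbb{N}$ and $w\in\mathscr{S}(\R)$, a finite constant
\[
M_{k,w}:=\sup_{n\in\mathbb{Z}}\ \sup_{x\in\R}\bigl|w(x)(\psi_n)^{(k)}(x)\bigr|<\infty.
\]

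Now fix $k\in\mathbb{N}$ and $v\in\mathscr{S}(\R)$, and write $v=v_1v_2$ with $v_1,v_2\in\mathscr{S}(\R)$, which is possible by the factorisation of Schwartz functions cited in the excerpt (see \cite{MR0741451}). For $x$ in $\psi_{-n}\bigl([0,\psi(0)]\bigr)=[\psi_{-n}(0),\psi_{-(n-1)}(0)]\subseteq(-\infty,\psi_{-(n-1)}(0)]$ one has $\bigl|v(x)(\psi_n)^{(k)}(x)\bigr|\le M_{k,v_2}\,|v_1(x)|$, hence
\[
\sup_{x\in\psi_{-n}([0,\psi(0)])}\bigl|v(x)(\psi_n)^{(k)}(x)\bigr|\le M_{k,v_2}\,\sup_{x\le\psi_{-(n-1)}(0)}|v_1(x)|,
\]
and the right-hand side tends to $0$ as $n\to\infty$ because $\psi_{-(n-1)}(0)\to-\infty$ and $v_1$ is rapidly decreasing. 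The second limit in $(iii)$ is obtained the same way: $(\psi_{-n})^{(k)}$ belongs to the same bounded family (as $-n\in\mathbb{Z}$), and on $\psi_n\bigl([0,\psi(0)]\bigr)=[\psi_n(0),\psi_{n+1}(0)]\subseteq[\psi_n(0),\infty)$ one bounds $|v(x)(\psi_{-n})^{(k)}(x)|\le M_{k,v_2}|v_1(x)|$ and uses $\psi_n(0)\to+\infty$ together with the decay of $v_1$. Thus condition $(iii)$ holds and $C_\psi$ is mixing; in fact the same estimates work for arbitrary $a$, so $(ii)$ holds as well.

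The argument is almost mechanical once these reformulations are in place; the only step that requires a moment's thought is the passage from \emph{boundedness} of $\{(\psi_n)'\}$ to \emph{convergence to zero} of the relevant suprema, and this is precisely where the splitting $v=v_1v_2$ does the work — the uniform bound is absorbed into $v_2$, while the sets over which the suprema are taken escape to $\pm\infty$, so that the remaining factor $|v_1|$ vanishes in the limit. The rest is bookkeeping around the dichotomy $\psi(x)>x$ versus $\psi(x)<x$ and the orientation of the intervals appearing in Theorem~\ref{thm: mixing_bij}.
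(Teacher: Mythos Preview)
Your proof is correct and follows the same route as the paper's: sequential hypercyclicity from Corollary~\ref{corollary: sufficient for hypercyclicity}, and mixing by verifying condition $(iii)$ of Theorem~\ref{thm: mixing_bij} after noting that the intervals $\psi_{\pm n}([a,\psi(a)])$ escape to $\pm\infty$. The only technical difference is how the boundedness hypothesis is unpacked: the paper converts it into uniform polynomial bounds $|(\psi_n)^{(k)}(x)|\le C(1+|x|^2)^m$ (via regularity of the steps $\mathscr{O}^m(\R)$) and uses $v(x)(1+|x|^2)^m\to 0$ directly, whereas you stay with the seminorm definition and handle the passage from boundedness to decay via the factorisation $v=v_1v_2$; both are equally short, and your explicit check that $\psi'>0$ is a detail the paper leaves implicit.
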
 

\begin{proof}
	By hypothesis, for every $k\in\mathbb{N}$ there are $C>0$ and $m\in\mathbb{N}_0$ such that $|(\psi_n)^{(k)}(x)|\leq C(1+|x|^2)^m$ for every $x\in\R$ and $n\in\mathbb{Z}$. We consider only the case that $\psi(x)>x$. The case $\psi(x)<x$ is proved along the same lines. Hence, $\lim_{n\rightarrow\infty}\psi_n(a)=\infty$ and $\lim_{n\rightarrow\infty}\psi_{-n}(a)=-\infty$ for each $a\in\R$ so that
	$$\lim_{n\rightarrow\infty}\sup_{x\in\psi_{-n}([a,\psi(a)])}|v(x)(\psi_n)^{(k)}(x)|\leq C \lim_{n\rightarrow\infty}\sup_{x\in\psi_{-n}([a,\psi(a)])}|v(x)|(1+|x|^2)^m=0$$
	as well as
	$$\lim_{n\rightarrow\infty}\sup_{x\in\psi_{n}([a,\psi(a)])}|v(x)(\psi_{-n})^{(k)}(x)|\leq C \lim_{n\rightarrow\infty}\sup_{x\in\psi_{n}([a,\psi(a)])}|v(x)|(1+|x|^2)^m=0$$
	for every $v\in\mathscr{S}(\R)$, $k\in\mathbb{N}$. Hence, $C_\psi$ is mixing by Theorem \ref{thm: mixing_bij}.
\end{proof}

\quad

\begin{example}
	From Corollary \ref{corollary: sufficient for hypercyclicity and mixing form bijective symbol} it easily follows that for every $\beta\not=0$
	the operator $C_\psi$, where $\psi(x)=x+\beta$, is mixing on $\mathscr{O}_M(\R)$. This has already been proved in \cite[Proposition 3.6]{MR4419539}.
\end{example}

\quad

\begin{example}
	\label{ex: not_mi}
	Let $\widetilde{\psi}:[0,1]\to\mathbb{R}$ be a smooth function 
	such that $\widetilde{\psi}(x)=3x+1$ for $x\in [0,1/7]$, $\widetilde{\psi}(x)=3x-1$ for $x\in [6/7,1]$
	and $\widetilde{\psi}'(x)>0$ for $x\in [0,1]$ (such a function exists by \cite[Lemma 9]{MR4198421}).
	The function $\psi:\R\to\R$ defined by the formula
	\[
	\psi(x)=\widetilde{\psi(}x-n)+n \quad\text{if}\quad x\in [n,n+1],n\in\mathbb{Z},   
	\]
	belongs to $\mathscr{O}_M(\R)$, has no fixed points and a non-vanishing derivative. One can easily calculate that for every 
	$n\in\mathbb{N}$
	\[
	\psi_{-n}(0)=-n\quad\text{and}\quad(\psi_n)'(\psi_{-n}(0))=\psi_n'(-n)=3^n.
	\]
	Let now $v\in  \mathscr{S}(\mathbb{R})$ be such that 
	$v(x)=e^x$ for $x<0$.
	Then 
	\[
	\lim_{n\to\infty} v(\psi_{-n}(0))(\psi_n)'(\psi_{-n}(0))=\infty.
	\]
	Thus, by Theorem \ref{thm: mixing_bij}, the operator $C_\psi$ is not mixing on 
	$\mathscr{O}_M(\R)$. However it is mixing when considered as an operator acting on $C^\infty(\mathbb{R})$ by \cite[Theorem 4.2]{MR3543813}.
\end{example}

\quad

In order to give more examples we will need the following technical lemma.

\quad

\begin{lem}\label{lem: majorant}
	Let $f\in C^\infty(\mathbb{R})$ be such that $\sup_{x\in\R}(1+|x|^2)^n|f(x)|<\infty$ for every $n\in\N$.
	Then, there exists $g\in\mathscr{S}(\mathbb{R})$, 
	non-decreasing on $(-\infty,0]$ and  non-increasing on $[0,\infty)$, such that $|f(x)|\leq g(x)$ for all $x\in\mathbb{R}$. 
\end{lem}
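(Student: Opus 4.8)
The plan is to replace $|f|$ by a rapidly decreasing, \emph{even} majorant that is non-increasing in $|x|$, and then to mollify that majorant with a bell-shaped bump, checking that mollification destroys neither the decay nor the monotone profile.

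First I would fix an even, non-negative $\rho\in C_c^\infty(\mathbb{R})$ with $\operatorname{supp}\rho\subseteq[-1,1]$, $\int_{\mathbb{R}}\rho=1$, and $\rho$ non-increasing on $[0,\infty)$ (a suitable multiple of the standard mollifier has this bell shape), and then set
\[
\Phi(x):=\sup_{|y|\ge\max\{|x|-1,\,0\}}|f(y)|,\qquad x\in\mathbb{R}.
\]
By construction $\Phi$ is even, bounded, Borel measurable, non-increasing on $[0,\infty)$, and $\Phi\ge|f|$ pointwise. Using the hypothesis in the form $|f(y)|\le c_n(1+y^2)^{-n}$ together with the fact that $\max\{|x|-1,0\}$ is comparable to $|x|$ for large $|x|$, one checks that for each $n$ there is $C_n$ with $\Phi(x)\le C_n(1+x^2)^{-n}$ for all $x$; thus $\Phi$ decays faster than any polynomial.

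Next I would put $g:=\Phi*\rho$. Since $\Phi$ is bounded measurable and $\rho\in C_c^\infty$, this is well defined, $g\in C^\infty(\mathbb{R})$ with $g^{(k)}=\Phi*\rho^{(k)}$, and $g$ is even. For the domination: $g(x)=\int_{-1}^{1}\Phi(x-t)\rho(t)\,dt\ge\inf_{|t|\le1}\Phi(x-t)$, and since the point of $[x-1,x+1]$ of largest modulus has modulus $|x|+1$ and $\Phi$ is non-increasing in $|\cdot|$, we get $g(x)\ge\Phi(|x|+1)=\sup_{|y|\ge|x|}|f(y)|\ge|f(x)|$ — this is exactly where the shift by $1$ in the definition of $\Phi$ is used, to compensate for the fact that convolution with a probability density lowers the peak. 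For the decay: $|g^{(k)}(x)|\le\|\rho^{(k)}\|_{L^1}\sup_{|t|\le1}\Phi(x-t)=\|\rho^{(k)}\|_{L^1}\,\Phi(\max\{|x|-1,0\})$, which by the previous step is dominated by every inverse power of $(1+x^2)$; hence $g\in\mathscr{S}(\mathbb{R})$.

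It remains to check the shape. Since $g$ is even it suffices to show $g$ is non-increasing on $[0,\infty)$. Given $0\le x_1<x_2$, set $c=(x_1+x_2)/2\ge0$ and reflect the integration variable about $c$ in the part of the integral over $(c,\infty)$; using that $\rho$ is even, $x_1-2c=-x_2$ and $x_2-2c=-x_1$, this turns $g(x_1)-g(x_2)=\int_{\mathbb{R}}\Phi(s)\big(\rho(x_1-s)-\rho(x_2-s)\big)\,ds$ into
\[
g(x_1)-g(x_2)=\int_{-\infty}^{c}\big(\Phi(s)-\Phi(2c-s)\big)\big(\rho(x_1-s)-\rho(x_2-s)\big)\,ds .
\]
For $s<c$ one has $|s|\le|2c-s|$ (because $c\ge0$) and $|x_1-s|\le|x_2-s|$, so $\Phi(s)\ge\Phi(2c-s)$ and $\rho(x_1-s)\ge\rho(x_2-s)$, i.e.\ both factors in the integrand are non-negative; hence $g(x_1)\ge g(x_2)$. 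The only genuinely delicate point is the interplay of the two normalizations — the shift by $1$, which repairs the loss of mass at the peak under mollification, and the choice of a bell-shaped mollifier, which is precisely what makes this last monotonicity computation go through; all remaining steps are routine estimates.
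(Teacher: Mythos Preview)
Your argument is correct and proceeds along a genuinely different route from the paper's. The paper builds $g$ by hand: it records the values $s_n=\sup_{|x|\ge n-1}|f(x)|$, observes that $(s_n)$ is rapidly decreasing, and then glues together the constants $s_n$ using a fixed smooth transition function $\varphi$ on $[0,1]$, defining $g(x)=s_{n+1}+(s_n-s_{n+1})\varphi(x-n)$ on $[n,n+1)$ and extending evenly. Your proof instead follows the pattern \emph{rough majorant, then mollify}: you produce an even, radially non-increasing, rapidly decreasing measurable function $\Phi\ge|f|$ with a built-in margin of $1$, convolve with a bell-shaped bump, and then verify that the convolution keeps (i) the domination $g\ge|f|$ (thanks to the margin), (ii) the Schwartz decay (since $\Phi$ is rapidly decreasing and $\rho$ has compact support), and (iii) the radial monotonicity (via the reflection identity, which is where the unimodality of $\rho$ enters). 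The paper's route is shorter and entirely elementary; yours is a bit heavier technically but is a reusable template --- the same mollification-plus-margin trick produces a smooth monotone majorant from any rough one, and the reflection computation is the general fact that convolution with an even unimodal kernel preserves even unimodality.
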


\begin{proof}
	We set $s_0=0$ and for every $n\in\N$ let 
	\[
	s_n=\sup_{|x|\geq n-1}|f(x)|.
	\]
	One easily verifies that $(s_n)_{n\in\mathbb{N}}$ is a rapidly decreasing sequence. Let $\varphi\colon [0,1]\to\mathbb{R}$ be a smooth function which is equal to $1$ in a neighborhood of 0, equal to $0$ in a neighborhood of $1$, and is  non-increasing on  $[0,1]$.  
	We define $g\colon\mathbb{R}\to\mathbb{R}$ by the formula
	\[
	g(x)=\begin{cases}
		s_{n+1}+(s_n-s_{n+1})\varphi(x-n), & x\in [n,n+1)\text{ for some } n\in\N\cup\{0\};\\
		g(-x), &x<0. 
	\end{cases}
	\]
	It is clear that $g$ has all the requested properties.
\end{proof}

\begin{example}
	\label{ex: mix}
	Let 
	\[
	\widetilde{\psi}(x)=\begin{cases}
		\sqrt{x^2+1}, & x\geq 1,\\
		\frac{\sqrt{2}}{2}x+1, & x\in [-\sqrt{2},0],\\
		-\sqrt{x^2-1}, & x\leq -\sqrt{3},
	\end{cases}
	\]
	and let $\psi$ be any smooth extension of $\widetilde{\psi}$ to $\mathbb{R}$ which satisfies $\psi'(x)>0$ for all $x\in\mathbb{R}$ (such an extension exists by \cite[Lemma 9]{MR4198421}). 
	\begin{center}
		\begin{tikzpicture}
			\begin{axis}[height=6cm,
				width=8cm,
				legend pos=north west,
				axis lines=center,
				]
				\addplot[
				domain = -5:5,
				samples = 200,
				smooth,
				thick,
				red,
				] {x}; 
				\addplot[
				domain = 1:5,
				samples = 200,
				smooth,
				thick,
				blue,
				] {(x^2+1)^(1/2)}; 
				\addplot[
				domain = -5:(-3^(0.5)),
				samples = 200,
				smooth,
				thick,
				blue,
				] {-(x^2-1)^(1/2)}; 
				\addlegendentry{y=x}
				
				\addplot[
				domain = (-2^(0.5)):0,
				samples = 200,
				smooth,
				thick,
				blue,
				] {1/2*2^(0.5)*x+1}; 
				\addlegendentry{$y=\widetilde{\psi}(x)$}
				
			\end{axis}   
		\end{tikzpicture}
	\end{center}
	It is clear that $\psi\in \mathscr{O}_M(\R)$. We will show that the composition operator $C_\psi$ is mixing on $\mathscr{O}_M(\R)$.
	
	In what follows we will use the following properties of the function  $\psi$:
	\begin{enumerate}[label=(\arabic*)]
		\item it is bijective and $\psi(x)>x$ for every $x\in\mathbb{R}$;
		\item for every $x\in\mathbb{R}$  
		\[
		\lim_{n\to\infty} \psi_n(x)=\infty \text{ and }
		\lim_{n\to\infty} \psi_{-n}(x)=-\infty;
		\]
		\item for every $x\leq -\sqrt{2}$ and  
		$n\in\mathbb{N}$ we have $\psi_{-n}(x)=-\sqrt{x^2+n}$;
		\item for every $k\in\mathbb{N}$,  $a\leq b\leq -\sqrt 2$, $v\in \mathscr{S}(\mathbb{R})$ 
		\[
		\lim_{n\to\infty}
		\sup_{x\in [\psi_{-n}(a),\psi_{-n}(b)]}
		\left|v(x)(\psi_n)^{(k)}(x)\right|=0;
		\]
		\item for every $x\geq 1$ and every 
		$n\in\mathbb{N}$ we have $\psi_{n}(x)=\sqrt{x^2+n}$;
		\item for every $k\in\mathbb{N}$, $1\leq a\leq b$, $v\in \mathscr{S}(\mathbb{R})$  
		\[
		\lim_{n\to\infty}
		\sup_{x\in [\psi_{n}(a),\psi_{n}(b)]}
		\left|v(x)(\psi_{-n})^{(k)}(x)\right|=0.
		\]
	\end{enumerate}
	Properties $(1)$, $(2)$, $(3)$ and $(5)$ are easy to verify, we will show now
	that $(4)$ is satisfied, $(6)$ can be checked in a similar way.
	
	In order to prove that $\psi$ satisfies $(4)$ we can assume (in view of Lemma \ref{lem: majorant}) that 
	$v$ is non-negative and non-decreasing on $(-\infty,0]$. For every $n\in\mathbb{N}$ and  $x\in [\psi_{-n}(a),\psi_{-n}(b)]$
	we have $\psi_n(x)=-\sqrt{x^2-n}$. By Fa\`{a} di Bruno's formula, for every $k\geq 1$,  $n\in\mathbb{N}$ and   $x\in [\psi_{-n}(a),\psi_{-n}(b)]$ we thus have
	\begin{align*}
		(\psi_n)^{(k)}(x)=\sum_{\substack{i_1,i_2\geq 0 \\ i_1+2i_2=k}}C_{k,i_1,i_2} (x^2-n)^{-i_1-i_2+1/2}x^{i_1}, 
	\end{align*}
	where the constants $C_{k,i_1,i_2}$ do not depend on $n$. Therefore
	\begin{align*}
		&\sup_{x\in [\psi_{-n}(a),\psi_{-n}(b)]}
		\left|v(x)(\psi_n)^{(k)}(x)\right|
		\stackrel{(3)}{=}
		\sup_{x\in [-\sqrt{a^2+n},-\sqrt{b^2+n}]}
		\left|v(x)(\psi_n)^{(k)}(x)\right|\\
		\leq &
		\sum_{\substack{i_1,i_2\geq 0 \\ i_1+2i_2=k}}
		|C_{k,i_1,i_2}|v(-\sqrt{b^2+n})b^{-2i_1-2i_2+1}\left(\sqrt{a^2+n}\right)^{i_1}.
	\end{align*}
	Since $v\in\mathscr{S}(\R)$
	we get that 
	\[
	\lim_{n\to\infty}
	\sup_{x\in [\psi_{-n}(a),\psi_{-n}(b)]}
	\left|v(x)(\psi_n)^{(k)}(x)\right|=0.
	\]
	By $(4)$ and $(6)$, $\psi$ satisfies condition (iii) of Theorem \ref{thm: mixing_bij} so that $C_\psi$ is mixing on $\mathscr{O}_M(\R)$.
	
\end{example}

\quad

We continue with the analogue to Theorem \ref{thm: mixing_bij} for non-surjective symbol.

\quad

\begin{theorem}
	\label{thm:mix_not_bij}
	Let $\psi\in \mathscr{O}_M(\R)$ be non-surjective. Then, the following conditions are equivalent.
	\begin{enumerate}[label=(\roman*)]
		\item The operator $C_\psi\colon\mathscr{O}_M(\mathbb{R})\to \mathscr{O}_M(\mathbb{R})$, $f\mapsto f \circ \psi$ is mixing.
		\item $\psi$ is injective with a non-vanishing derivative and without fixed points such that for every $a\in\R$ and each $k\in\mathbb{N}$, for arbitrary $v\in \mathscr{S}(\mathbb{R})$ it holds
		\[
		\lim_{n\to\infty} \sup_{x\in\psi_n\left([\min\{a,\psi(a)\},\max\{a,\psi(a)\}]\right)}
		\left|v(x)(\psi_{-n})^{(k)}(x)\right|=0.
		\]
		\item $\psi$ is injective with a non-vanishing derivative and without fixed points, and there is $a\in\R$ such that for each $k\in\mathbb{N}$ and for arbitrary $v\in \mathscr{S}(\mathbb{R})$ it holds
		\[
		\lim_{n\to\infty} \sup_{x\in\psi_n\left([\min\{a,\psi(a)\},\max\{a,\psi(a)\}]\right)}
		\left|v(x)(\psi_{-n})^{(k)}(x)\right|=0.
		\]
	\end{enumerate}
\end{theorem}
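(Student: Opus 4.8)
The plan is to prove the cycle $(ii)\Rightarrow(iii)\Rightarrow(i)\Rightarrow(ii)$, imitating the proof of Theorem~\ref{thm: mixing_bij} and exploiting the simplifications that non-surjectivity brings. The implication $(ii)\Rightarrow(iii)$ is trivial. In all three steps I may assume $\psi(x)>x$ for every $x$: the hypotheses in $(ii)$/$(iii)$ force $\psi'>0$ (being fixed-point free, the continuous injection $\psi$ cannot be decreasing, since a strictly decreasing self-map of $\R$ always has a fixed point), so $\psi$ is strictly increasing and fixed-point free, whence $\psi(x)>x$ for all $x$ or $\psi(x)<x$ for all $x$; the second case reduces to the first through the conjugacy $C_\psi=C_r\circ C_{\sigma(\psi)}\circ C_r$ recalled before Theorem~\ref{theorem: sufficient for hypercyclicity} (mixing and non-surjectivity being invariant under it). Under this normalisation $\psi(\R)=(c,\infty)$ with $c\in\R$ finite (this finiteness is precisely non-surjectivity), the forward orbit $\psi_n(c)$ increases to $+\infty$ (a bounded monotone orbit would converge to a fixed point), $\psi_n(\R)=(\psi_{n-1}(c),\infty)=\operatorname{dom}\psi_{-n}$, and $\psi_{-n}(x)\to-\infty$ as $x\to c^+$.

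For $(iii)\Rightarrow(i)$ I would apply Kitai's criterion (\cite[Thm.~12.33.]{MR2919812}) with $X_0$ the dense subspace of compactly supported smooth functions and $Sf$ defined as the extension by $0$ of $f\circ\psi_{-1}$; this is well defined on $X_0$ because $\operatorname{supp}(f\circ\psi_{-n})=\psi_n(\operatorname{supp}f)$ is a compact subset of $(\psi_{n-1}(c),\infty)$, and $C_\psi S=\operatorname{id}_{X_0}$. The decisive simplification relative to the surjective case is that $C_\psi^n f\to 0$ holds automatically: since $\inf_x\psi_n(x)=\psi_{n-1}(c)\to+\infty$ one has $C_\psi^nf=f\circ\psi_n\equiv 0$ for all large $n$. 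Hence it remains only to show $S^nf=f\circ\psi_{-n}\to 0$ in $\mathscr{O}_M(\R)$. Covering the compact set $\operatorname{supp}f$ by finitely many intervals of the form $(b,\psi_2(b))$ (these cover $\R$, as $y\in(b,\psi_2(b))$ for $b$ slightly below $y$) and passing to a subordinate partition of unity, it suffices to treat $f$ with $\operatorname{supp}f\subset(b,\psi_2(b))$; then Fa\`{a} di Bruno's formula applied to $(f\circ\psi_{-n})^{(j)}$, the boundedness of the derivatives of $f$ on its fixed support, and the factorisation of every $v\in\mathscr{S}(\R)$ into finitely many Schwartz factors (\cite{MR0741451}) reduce the claim to the assertion that
\[
\lim_{n\to\infty}\sup_{x\in\psi_n([b,\psi(b)])}\bigl|v(x)(\psi_{-n})^{(k)}(x)\bigr|=0
\]
for every $k\in\N$ and $v\in\mathscr{S}(\R)$, together with the analogous statement for the interval $[\psi(b),\psi_2(b)]$.

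Reducing these assertions for an arbitrary interval $[b,\psi(b)]$ to the single interval $[a,\psi(a)]$ furnished by hypothesis $(iii)$ is, I expect, the main obstacle. I would prove a propagation lemma: for any $b$ there is $m\in\mathbb{Z}$ such that $\psi_m$ is defined at $b$ and $\psi_m(b)\in[a,\psi(a))$ — this follows from $\psi$ being increasing together with $\psi_j(b)$ increasing to $+\infty$ and the backward orbit of $b$ decreasing below $c<a$. Then $\psi_m([b,\psi(b)])=[\psi_m(b),\psi_{m+1}(b)]\subset[a,\psi_2(a)]$, and, crucially in the non-surjective setting, $[\psi_m(b),\psi_{m+1}(b)]$ lies in $\operatorname{dom}\psi_{-m}$ — namely in $\psi_m(\R)$ when $m>0$, and in $\R$ when $m\le 0$ — because it is $\psi_m$ applied to a set whose image under $\psi$ stays inside $(c,\infty)$. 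Writing $\psi_{-n}=\psi_{-m}\circ\psi_{-(n-m)}$ for $n>\max\{m,0\}$ and expanding $(\psi_{-n})^{(k)}$ by Fa\`{a} di Bruno's formula, the factors contributed by $\psi_{-m}$ are derivatives of a fixed smooth function evaluated on the fixed compact subset $[\psi_m(b),\psi_{m+1}(b)]$ of its domain, hence bounded uniformly in $n$; so, after once more splitting the weight into Schwartz factors, the required limits for $[b,\psi(b)]$ reduce to those for $[a,\psi(a)]$ and for $[\psi(a),\psi_2(a)]$, the latter being handled in the same way. This argument also establishes $(iii)\Rightarrow(ii)$.

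Finally, $(i)\Rightarrow(ii)$. That $\psi$ is injective with non-vanishing derivative and without fixed points comes from Proposition~\ref{proposition: necessary for transitivity}. Fixing $a\in\R$, I would obtain the limit in $(ii)$ by induction on $k$, exactly as in Theorem~\ref{thm: mixing_bij}, using the nonempty open sets $U=\{f:1<|f^{(l)}(x)|<M\ \text{for}\ x\in[a,\psi(a)],\ 0\le l\le k\}$ (with $M$ as chosen there) and $V=\{f:\sup_x|v(x)f^{(k)}(x)|<\varepsilon/2\}$. Mixing provides, for all large $n$, some $h\in V$ with $g:=C_\psi^n h\in U$; since $g=h\circ\psi_n$, we get $g\circ\psi_{-n}=h$ on $\operatorname{dom}\psi_{-n}=\psi_n(\R)$, hence $(g\circ\psi_{-n})^{(k)}=h^{(k)}$ on a neighbourhood of $\psi_n([a,\psi(a)])$. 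Expanding $(g\circ\psi_{-n})^{(k)}$ by Fa\`{a} di Bruno's formula, using $|g'(\psi_{-n}(x))|>1$ on $\psi_n([a,\psi(a)])$ (because $\psi_{-n}(x)\in[a,\psi(a)]$ there), bounding the lower-order terms via the inductive hypothesis together with $|g^{(l)}|<M$ and the Schwartz factorisation, and invoking $h\in V$, one arrives at $\sup_{x\in\psi_n([a,\psi(a)])}|v(x)(\psi_{-n})^{(k)}(x)|<\varepsilon$. Unlike in Theorem~\ref{thm: mixing_bij}, no estimate involving $(\psi_n)^{(k)}$ is needed or even meaningful — $\psi_{-n}([a,\psi(a)])$ is empty for large $n$ — so this single family of estimates finishes the proof.
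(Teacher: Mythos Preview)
Your proposal is correct and follows essentially the same approach as the paper. Both arguments verify Kitai's criterion (the paper does so by hand, constructing $f+g_n$ directly), exploit the key simplification that non-surjectivity forces $C_\psi^n f=0$ for large $n$, and reduce the convergence $g\circ\psi_{-n}\to 0$ to the hypothesis in $(iii)$ via a partition of unity and Fa\`{a} di Bruno's formula. The one noteworthy difference is that you isolate an explicit propagation lemma---transferring the estimate from the single interval $[a,\psi(a)]$ to an arbitrary $[b,\psi(b)]$ by composing with a fixed iterate $\psi_{-m}$---which also gives $(iii)\Rightarrow(ii)$ directly; the paper instead uses a partition of unity on $(a,\infty)$ subordinate to the cover by $(\psi_m(a),\psi_{m+2}(a))_{m\ge 0}$ and recovers $(ii)$ only through the cycle $(iii)\Rightarrow(i)\Rightarrow(ii)$. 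This is a presentational rather than a substantive difference.
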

\begin{proof}
	Obviously, $(ii)$ implies $(iii)$. The implication $(i)\Rightarrow (ii)$ is shown exactly as in the proof of Theorem \ref{thm: mixing_bij}.
	To prove that $(iii)\Rightarrow (i)$ let us fix non-empty and open sets $U$ and $V$ in $\mathscr{O}_M(\R)$ and  two compactly supported smooth functions $f\in U$ and $g\in V$. We first consider the case $\psi(x)>x$ for every $x\in\R$.
	
	We set $\alpha=-1+\inf\text{supp}\,g$. Moreover, for $n\in\N$ we define
	\[g_n(x)=\begin{cases}
		g(\psi_{-n}(x)), & x\in\psi_n(\mathbb{R}),\\
		0, &\text{otherwise};
	\end{cases}\]
	so that $g_n$ is a compactly supported smooth function, supported in $\psi_n((\alpha,\infty))$. For $n$ large enough we have
	$$\psi_n((\alpha,\infty))\subset (a,\infty)=\cup_{m\in\N_0}(\psi_m(a),\psi_{m+2}(a)).$$ Obviously, the sequence of open intervals $(\psi_m(a),\psi_{m+2}(a))_{m\in\N_0}$ is a locally finite cover of $(a,\infty)$. Let $(\phi_m)_{m\in\N_0}$ be a partition of unity on $(a,\infty)$ subordinate to this cover. For large enough $N$, as in the proof of Theorem \ref{thm: mixing_bij}, one shows that $(\phi_m g_n)_{n\in\N, n\geq N}$ converges to zero in $\mathscr{O}_M(\R)$ for every $m\in\N_0$ which implies that the same holds for $(g_n)_{n\in\N, n\geq N}$. Therefore $f+g_n\in U$ for $n$ large enough.
	Furthermore, since $\psi_{n+1}(\R)\subset\psi_n(\mathbb{R})$ for $n\in\mathbb{N}$ and $\displaystyle \cap_{n\in\mathbb{N}}\psi_n(\mathbb{R})=\emptyset$, we have that $C_\psi^n(f)=0$ for $n$ large enough. Therefore, for $n$ large enough
	we have $C_\psi^n(f+g_n)=g\in V$.
	
	In case $\psi(x)<x$, we define $\tilde{\alpha}=1+\sup\text{supp}\,g$. Replacing $(\alpha,\infty)$ by $(-\infty,\tilde{\alpha})$ and $(a,\infty)$ by $(-\infty,a)$, respectively, the proof is mutatis mutandis the same.
\end{proof}
\begin{example}
	Let 
	\[
	\widetilde{\psi}(x)=\begin{cases}
		e^x, & x\leq 0;\\
		2x, & x\geq 1,
	\end{cases}
	\]
	and let $\psi$ be any smooth extension of $\widetilde{\psi}$ to $\mathbb{R}$ which satisfies $\psi'(x)>0$ for all $x\in\mathbb{R}$ (such an extension exists by \cite[Lemma 9]{MR4198421}). 
	It is clear that $\psi\in \mathscr{O}_M(\R)$ and $\psi(x)>x$ for all $x\in \R$.  Obviously $\psi_n(x)=2^nx$ whenever $x\geq 1$ and for $x\in\psi_n([1,\psi(1)])$ we have $\psi_{-n}(x)=2^{-n}x$. It is straightforward to show that condition $(iii)$ in Theorem \ref{thm:mix_not_bij} is fulfilled for $a=1$. Therefore the composition operator $C_\psi$ is mixing on $\mathscr{O}_M(\R)$.
\end{example}

\section{A relation to Abel's equation}\label{section: Abel}
In this section we relate the mixing property of composition operators acting on $\mathscr{O}_M(\mathbb{R})$ with the solvability of Abel's functional equation, i.e. the equation 
\[
H(\psi(x))=H(x)+1
\]
where $\psi\colon\R\to\R$ is a given function.
Solvability of this equation  is well-understood in various situations. For example it is known that if $\psi\colon\mathbb{R}\to\mathbb{R}$ is a bijective smooth (or real analytic) function with no fixed points, then this equation has a smooth (real analytic) solution, see \cite{MR1488145,MR3323563}. 

\quad

\begin{theorem}
	\label{thm: abel}
	Let $\psi\in \mathscr{O}_M(\mathbb{R})$ be bijective.
	The following conditions are equivalent.
	\begin{enumerate}[label=(\roman*)]
		\item The operator $C_\psi\colon\mathscr{O}_M(\mathbb{R})\to \mathscr{O}_M(\mathbb{R})$, $f\mapsto f \circ \psi$ is mixing and for every $v\in \mathscr{S}(\mathbb{R})$
		\[
		\lim_{n\to\infty}v(\psi_n(0))\cdot n=0
		\text{ and }
		\lim_{n\to\infty}v(\psi_{-n}(0))\cdot n=0.
		\]
		\item There exists  $H\in \mathscr{O}_M(\R)$
		with a non-vanishing derivative and which satisfies the equation \[ H(\psi(x))=H(x)+1\text{ for every }x\in \mathbb{R}.\]
	\end{enumerate}
\end{theorem}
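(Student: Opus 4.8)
The plan is to prove the two implications separately; the construction of $H$ in $(i)\Rightarrow(ii)$ is the substantial part, whereas $(ii)\Rightarrow(i)$ follows from Theorem~\ref{thm: mixing_bij} by differentiating Abel's equation. For $(ii)\Rightarrow(i)$, suppose $H\in\mathscr{O}_M(\R)$ has nonvanishing derivative and $H\circ\psi=H+1$. Differentiating gives $H'(\psi(x))\psi'(x)=H'(x)$, so $\psi'$ never vanishes, and $\psi$ has no fixed points since $H(\psi(x))\neq H(x)$; iterating, $H(\psi_n(x))=H(x)+n$ for every $n\in\mathbb{Z}$. From $n=H(\psi_n(0))-H(0)$ and the polynomial boundedness of $H\in\mathscr{O}_M(\R)$, together with $\psi_n(0)\to\pm\infty$, we get $\lim_{n}v(\psi_{\pm n}(0))\cdot n=0$ for every $v\in\mathscr{S}(\R)$, which is the second half of $(i)$. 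For the mixing part I would verify condition $(ii)$ of Theorem~\ref{thm: mixing_bij}: applying Fa\`{a} di Bruno's formula to $H(\psi_n(x))=H(x)+n$ and solving for $(\psi_n)^{(k)}(x)$ — whose top-order term carries the factor $H'(\psi_n(x))$, bounded away from $0$ when $\psi_n(x)$ stays in a fixed compact set $I$ — an induction on $k$ yields $|(\psi_n)^{(k)}(x)|\leq C_k(1+|x|^2)^{t_k}$ on $\psi_{-n}(I)$ uniformly in $n$, using that $H$ and all its derivatives are polynomially bounded; since $\psi_{-n}(I)$ recedes to $\pm\infty$ and $v\in\mathscr{S}(\R)$, the required suprema tend to $0$, and the estimates with $(\psi_{-n})^{(k)}$ on $\psi_n(I)$ are proved the same way from $H(\psi_{-n}(x))=H(x)-n$.

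For $(i)\Rightarrow(ii)$, by Proposition~\ref{proposition: necessary for transitivity} the symbol $\psi$ is fixed-point free with $\psi'>0$, and I may assume $\psi(x)>x$ for every $x$ (the case $\psi(x)<x$ reduces to it by conjugating with the reflection $r$, as in the comment before Theorem~\ref{theorem: sufficient for hypercyclicity}). Put $c=\psi(0)$, so the intervals $J_n=[\psi_n(0),\psi_{n+1}(0)]$, $n\in\mathbb{Z}$, tile $\R$. The idea is to prescribe $w:=H'$ on the fundamental domain $[0,c]$ and propagate it by the differentiated functional equation $w(\psi(x))\psi'(x)=w(x)$. Concretely, I would build a positive $w_0\in C^\infty([0,c])$ whose $\infty$-jet at $0$ is prescribed first, with $w_0(0)>0$ arbitrary, and whose $\infty$-jet at $c$ is then forced by requiring $w_0$ and $u\mapsto w_0(\psi_{-1}(u))\,\psi_{-1}'(u)$ to agree to infinite order at $c$; this is a lower-triangular recursion with nonzero diagonal between the two jets, which in particular forces $w_0(c)=w_0(0)/\psi'(0)>0$, so such a $w_0$ exists by Borel's theorem, positivity on the interior being arranged by adding a bump supported away from the endpoints. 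Setting $w(x)=w_0(\psi_{-n}(x))\,\psi_{-n}'(x)$ for $x\in J_n$ defines a smooth positive function on $\R$ — smoothness at the junction points $\psi_n(0)$ is exactly the jet condition, using $\psi_{-n}=\psi_{-1}\circ\psi_{-(n-1)}$ — and $w(\psi(x))\psi'(x)=w(x)$ by telescoping of $\psi_{-n}'$. Then $\frac{d}{dx}\big(\int_0^{\psi(x)}w-\int_0^x w\big)=w(\psi(x))\psi'(x)-w(x)=0$, so $H_0(x):=\int_0^x w$ satisfies $H_0\circ\psi=H_0+I$ with $I=\int_0^c w_0>0$, and $H:=H_0/I$ solves Abel's equation with $H'=w/I>0$.

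It remains to show $H\in\mathscr{O}_M(\R)$. For a derivative of order $j\geq1$, on $J_n$ we have $H^{(j)}=\frac1I\big((w_0\circ\psi_{-n})\cdot\psi_{-n}'\big)^{(j-1)}$, and Fa\`{a} di Bruno's formula bounds $|H^{(j)}(x)|$ by a constant depending only on the jets of $w_0$ on $[0,c]$ times products of $|(\psi_{\pm n})^{(l)}(x)|$ with $l\leq j$, over the appropriate sets $\psi_{\pm n}(\text{compact})$; writing $v\in\mathscr{S}(\R)$ as a product of several Schwartz functions (\cite{MR0741451}) and invoking the suprema in condition $(ii)$ of Theorem~\ref{thm: mixing_bij} (available since $C_\psi$ is mixing) then gives $\sup_x|v(x)H^{(j)}(x)|<\infty$. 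For $j=0$ the mixing conditions say nothing, and this is exactly where the growth hypothesis in $(i)$ enters: on $J_n$ one has $|H(x)|\leq|n|+1$, while $x\in J_n$ with $n\geq0$ forces $x\geq\psi_n(0)$; passing via Lemma~\ref{lem: majorant} to a Schwartz majorant of $v$ that is non-increasing on $[0,\infty)$ and non-decreasing on $(-\infty,0]$, one estimates $|v(x)H(x)|\leq v(\psi_n(0))\,(|n|+1)\to0$ as $x\to+\infty$ by $\lim_m m\,v(\psi_m(0))=0$, and symmetrically as $x\to-\infty$ using $\lim_m m\,v(\psi_{-m}(0))=0$; hence $\sup_x|v(x)H(x)|<\infty$, so $H\in\mathscr{O}_M(\R)$.

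I expect the main obstacle to be the construction in $(i)\Rightarrow(ii)$. The naive ``flat at the endpoints'' gluing of $H$ on the fundamental domain, which trivially produces a smooth solution of Abel's equation, is not allowed here, because it would force $H'=0$ at every point of the orbit $\{\psi_n(0):n\in\mathbb{Z}\}$; the nonvanishing-derivative requirement is precisely what compels the jet-recursion plus Borel argument for $w_0$. After that, the verification $H\in\mathscr{O}_M(\R)$ splits into the routine part ($j\geq1$, handled by Theorem~\ref{thm: mixing_bij} and the factorization of Schwartz functions) and the order-zero part, which is the one place where the extra growth assumption in $(i)$ is genuinely needed.
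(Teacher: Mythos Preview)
Your proposal is correct, and the verification that $H\in\mathscr{O}_M(\R)$ in $(i)\Rightarrow(ii)$ (splitting into $j=0$ via the growth hypothesis and $j\geq1$ via Theorem~\ref{thm: mixing_bij} plus Fa\`a di Bruno) matches the paper exactly. However, you diverge from the paper in two places.

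For the \emph{existence} of a smooth $H$ with $H'\neq0$ solving Abel's equation, the paper simply quotes an external result (\cite[Thm.~8]{MR4198421}) and then concentrates on proving $H\in\mathscr{O}_M(\R)$. You instead reconstruct that existence theorem from scratch via the fundamental-domain/Borel argument for $w=H'$. Your jet recursion is right: the compatibility condition at $c=\psi(0)$ determines the $\infty$-jet of $w_0$ at $c$ triangularly from its jet at $0$, and once this single gluing condition holds, the identity $w_0(\psi_{-1}(u))\psi_{-1}'(u)-w_0(u)$ vanishing to infinite order at $c$ propagates (via $\psi_{-(n-1)}$) to give smoothness at every $\psi_n(0)$. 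This is more self-contained but also considerably longer; the paper's one-line citation keeps the focus on the genuinely new part, namely the $\mathscr{O}_M$-membership.

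For $(ii)\Rightarrow(i)$, you prove mixing by verifying condition $(ii)$ of Theorem~\ref{thm: mixing_bij} directly: from $H\circ\psi_n=H+n$ and Fa\`a di Bruno you isolate $(\psi_n)^{(k)}(x)=\frac{1}{H'(\psi_n(x))}\big[H^{(k)}(x)-(\text{lower order})\big]$, and since $\psi_n(x)$ lies in the fixed compact $[a,\psi(a)]$ you get uniform polynomial bounds by induction on $k$. This works cleanly. The paper takes a different, more conceptual route: it observes that the commuting square $C_\psi\circ C_H=C_H\circ C_{x+1}$ exhibits $C_\psi$ as a quasi-conjugate of the translation $C_{x+1}$ (already known to be mixing), with $C_H$ having dense range because $H$ is a diffeomorphism of $\R$ and hence every test function lies in its image. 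The quasi-conjugacy argument is shorter and, more importantly, immediately yields Corollary~\ref{cor: hypercyclic} (hypercyclicity and chaos of $C_\psi$) for free, since these properties also transfer under quasi-conjugacy; your direct verification of Theorem~\ref{thm: mixing_bij} does not by itself give hypercyclicity.
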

\begin{proof}$ $\\ 
	$(i)\Rightarrow (ii)$
	Since $C_\psi$ is mixing, by Proposition \ref{proposition: necessary for transitivity}, the function $\psi$ has no fixed points and has a non-vanishing derivative. In what follows we will assume that $\psi(x)>x$ for every $x\in\R$, the other case can be done in a similar way.  By \cite[Thm. 8]{MR4198421} there exists a bijective smooth function $H$ with a non-vanishing derivative and which satisfies the equation 
	\begin{equation}
		\label{abel}
		H(\psi(x))=H(x)+1\text{ for every }x\in \mathbb{R}.
	\end{equation}
	We need to show that $H\in \mathscr{O}_M(\R)$, i.e.\ that for every
	$v\in\mathscr{S}(\mathbb{R})$ and $k\geq 0$
	\begin{equation}
		\label{todo}
		\sup_{x\in\mathbb{R}}\left|v(x)H^{(k)}(x)\right|<\infty.
	\end{equation}
	In view of Lemma \ref{lem: majorant} we may assume that $v$ is
	non-decreasing on $(-\infty,0)$ and  non-increasing on $[0,\infty)$.
	To prove $(\ref{todo})$ it is enough to show that 
	\begin{equation}
		\label{om1}
		\lim_{n\to\infty} \sup_{x\in [\psi_{n}(0),\psi_{n+1}(0)]}
		\left|
		v(x)H^{(k)}(x)\right|=0
	\end{equation}
	and
	\begin{equation}
		\label{om2}
		\lim_{n\to\infty} \sup_{x\in [\psi_{-n-1}(0),\psi_{-n}(0)]}
		\left|
		v(x)H^{(k)}(x)\right|=0.
	\end{equation}
	We will show $(\ref{om1})$, the proof of $(\ref{om2})$ is similar. 
	
	From $(\ref{abel})$ it follows that for $n\in\mathbb{N}$
	and $x\in [\psi_{n}(0),\psi_{n+1}(0)]$ we have 
	\[
	H(x)=H(\psi_{-n}(x))+n \text{ and }H^{(k)}(x)=(H\circ\psi_{-n})^{(k)}(x)
	\text{ for } k\in\mathbb{N}.\]
	Thus
	\begin{align*}
		\sup_{x\in [\psi_{n}(0),\psi_{n+1}(0)]}
		\left|
		v(x)H(x)\right|=&
		\sup_{x\in [\psi_{n}(0),\psi_{n+1}(0)]}
		\left|
		v(x)(H(\psi_{-n}(x))+n)\right|\\
		\leq &
		\left(n+\sup_{x\in [0,\psi(0)]}|H(x)|\right)\cdot v(\psi_n(0))
	\end{align*}
	and
	\begin{align*}
		\sup_{x\in [\psi_{n}(0),\psi_{n+1}(0)]}
		\left|
		v(x)H^{(k)}(x)\right|=&
		\sup_{x\in [\psi_{n}(0),\psi_{n+1}(0)]}
		\left|
		v(x)(H\circ\psi_{-n})^{(k)}(x)\right|.
	\end{align*}
	Therefore $(\ref{om1})$ follows from the assumptions on $\psi$
	and Theorem $\ref{thm: mixing_bij}$ combined with Fa\`{a} di Bruno's formula and the boundedness of $H^{(j)}$ on $[0,\psi(0)]$, $j\in\N\cup\{0\}$.

	$(ii)\Rightarrow (i)$ Due to $H(\psi(x))=H(x)+1$ for every $x\in\R$ it follows that $\psi$ does not have fixed points. Additionally, since $H(\psi_n(x))=H(x)+n$ for every $x\in\R$, $n\in\mathbb{Z}$, we conclude that $\lim_{n\rightarrow\infty}H(\psi_n(0))=\infty$ and $\lim_{n\rightarrow\infty}H(\psi_{-n}(0))=-\infty$ which implies the surjectivity of $H$. Since $H$ has non-vanishing derivative we conclude that $H$ is bijective. Moreover, for arbitrary $v\in\mathscr{S}(\R)$ we have
	$$\lim_{n\rightarrow\infty}v\left(\psi_n(0)\right)n=\lim_{n\rightarrow\infty}v\left(\psi_n(0)\right)\left(H\left(\psi_n(0)\right)-H(0)\right)=0$$
	because $Hv, H(0)v\in\mathscr{S}(\R)$ and $\lim_{n\rightarrow\infty}|\psi_n(0)|=\infty$, the latter since $\psi$ has no fixed points. In the same way it follows $\lim_{n\rightarrow\infty}v\left(\psi_{-n}(0)\right)n=0$.
	
	The conditions in $(ii)$ 
	imply that the diagram
	\[
	\begin{CD}
		\mathscr{O}_M(\mathbb{R}) @>C_{x+1}>> \mathscr{O}_M(\mathbb{R})\\ 
		@VC_H VV  @VC_H VV \\
		\mathscr{O}_M(\mathbb{R}) @>C_{\psi}>> \mathscr{O}_M(\mathbb{R})
	\end{CD}
	\]
	commutes and that the operator $C_H$ has dense range 
	(since all compactly supported smooth functions are in its image because $H$ is bijective). Thus $C_\psi$ is quasi-conjugate to the mixing operator $C_{x+1}$ and hence mixing.
\end{proof}

\begin{cor}
	\label{cor: hypercyclic}
	If $\psi\in \mathscr{O}_M(\R)$ is bijective and satisfies the conditions of Theorem \ref{thm: abel} (i), then $C_\psi$ is quasi-conjugate to the operator $C_{x+1}$ on  $\mathscr{O}_M(\mathbb{R})$. Therefore it is hypercyclic and chaotic.  
\end{cor}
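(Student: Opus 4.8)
The plan is to read off the quasi-conjugacy directly from the proof of Theorem \ref{thm: abel} and then transport chaos along it. First I would invoke the equivalence in Theorem \ref{thm: abel}: since $\psi$ is bijective and satisfies condition $(i)$, there is a bijective $H\in\mathscr{O}_M(\R)$ with non-vanishing derivative such that $H(\psi(x))=H(x)+1$ for every $x\in\R$. As recorded in the proof of the implication $(ii)\Rightarrow(i)$ there, Abel's equation forces the diagram
\[
\begin{CD}
\mathscr{O}_M(\R) @>C_{x+1}>> \mathscr{O}_M(\R)\\
@VC_H VV @VC_H VV\\
\mathscr{O}_M(\R) @>C_{\psi}>> \mathscr{O}_M(\R)
\end{CD}
\]
to commute, i.e.\ $C_\psi\circ C_H=C_H\circ C_{x+1}$, and $C_H$ has dense range because $H$ is bijective, so every compactly supported smooth function lies in its image. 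This is precisely the statement that $C_\psi$ is quasi-conjugate to $C_{x+1}$, which is the first assertion of the corollary.

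Next I would use that $C_{x+1}$ is both hypercyclic (indeed sequentially hypercyclic) and chaotic on $\mathscr{O}_M(\R)$ by Corollary \ref{corollary: chaos of shifts}. Since quasi-conjugacy through a continuous linear map with dense range transports hypercyclicity (see \cite{MR2919812}), hypercyclicity of $C_{x+1}$ yields hypercyclicity of $C_\psi$; concretely, if $f$ is a hypercyclic vector for $C_{x+1}$, then $C_H(f)$ is one for $C_\psi$.

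For chaos it remains to check that $C_\psi$ has a dense set of periodic points. If $f$ is periodic for $C_{x+1}$, say $C_{x+1}^N f=f$, then $C_\psi^N(C_H f)=C_H(C_{x+1}^N f)=C_H f$, so $C_H$ maps periodic points of $C_{x+1}$ to periodic points of $C_\psi$. Since the periodic points of $C_{x+1}$ are dense and $C_H$ has dense range, their image --- and hence the set of periodic points of $C_\psi$ --- is dense in $\mathscr{O}_M(\R)$ (the image of a dense set under a continuous map with dense range is dense). Together with hypercyclicity this shows that $C_\psi$ is chaotic.

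There is essentially no serious obstacle here: the corollary is a formal consequence of Theorem \ref{thm: abel} together with Corollary \ref{corollary: chaos of shifts} and the standard stability of (sequential) hypercyclicity and of density of periodic points under quasi-conjugacy; the only point requiring a line of care is that the intertwining map $C_H$ genuinely has dense range, which is immediate from bijectivity of $H$ and the density of compactly supported smooth functions in $\mathscr{O}_M(\R)$.
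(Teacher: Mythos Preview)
Your proposal is correct and follows exactly the route the paper intends: the quasi-conjugacy via $C_H$ is already set up in the proof of Theorem \ref{thm: abel}, and you then transport hypercyclicity and density of periodic points from $C_{x+1}$ (Corollary \ref{corollary: chaos of shifts}) through this quasi-conjugacy. The only addition over the paper is that you spell out explicitly why periodic points carry over, which is a standard verification.
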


\quad

\begin{remark} It is not clear to the authors if for  every mixing 
	composition operator $C_\psi$ on $\mathscr{O}_M(\R)$, where $\psi$ is bijective, it automatically holds
	\begin{equation}
		\label{notknow}
		\lim_{n\to\infty}v(\psi_n(0))\cdot n=0
		\text{ and }
		\lim_{n\to\infty}v(\psi_{-n}(0))\cdot n=0
		\text{ for any }v\in\mathscr{S}(\mathbb{R}).
	\end{equation}
	If this was the case, then every mixing $C_\psi$ would already be hypercyclic and chaotic by the above corollary.
	
	Condition $(\ref{notknow})$ is satisfied whenever there is $\beta>0$ for which $\psi(x)>x+\beta$ for every $x\in\R$. Example \ref{ex: not_mi} shows that the latter is not a sufficient condition for mixing.
	Example \ref{ex: mix} shows that $\lim_{x\to\infty}(\psi(x)-x)=0$ may happen for a mixing $C_\psi$. 
\end{remark}

\quad

\noindent\textbf{Open problems.} Let $\psi\in\mathscr{O}_M(\R)$ be such that $C_\psi$ is mixing on $\mathscr{O}_M(\R)$.
\begin{itemize}
	\item[1.] Assume additionally that $\psi$ is bijective. Is it true that
	$$\lim_{n\to\infty}v(\psi_n(0))\cdot n=0
	\text{ and }
	\lim_{n\to\infty}v(\psi_{-n}(0))\cdot n=0$$
	holds for every $v\in\mathscr{S}(\mathbb{R})$?
	\item[2.] Is $C_\psi$ (sequentially) hypercyclic on $\mathscr{O}_M(\R)$?
\end{itemize}

\quad

While we do not know the answer to problem 1, the next theorem shows that the sequence $(\psi_n(0))_{n\in\N}$ cannot grow too slowly.

\quad

\begin{theorem}
	\label{not-trans}
	Let $f\in C^\infty(\mathbb{R})$ be real valued such that $\sup_{x\in\R}(1+|x|^2)^n|f(x)|<\infty$ for every $n\in\N$, $\inf_{x\in\R}|1+f'(x)|>0$, and $f'\in\mathscr{O}_M(\R)$. Then, for $\psi(x)=x+f(x)$, the operator $C_\psi\colon\mathscr{O}_M(\mathbb{R})\to \mathscr{O}_M(\mathbb{R})$ is not topologically transitive.
\end{theorem}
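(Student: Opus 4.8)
The plan is first to make two reductions, then to establish a quantitative decay estimate for the derivatives of the inverse iterates, and finally to produce two open sets witnessing non‑transitivity. We may assume $\psi$ has no fixed point, since otherwise Proposition~\ref{proposition: necessary for transitivity} already gives the conclusion (and $f\equiv 0$ gives $C_\psi=\operatorname{id}$). Then $f$ has no zero, hence constant sign; as $C_\psi$ is conjugate to $C_{\sigma(\psi)}$ through the topological isomorphism $C_r$ and $\sigma(\psi)(x)=-\psi(-x)=x-f(-x)$, after replacing $f(x)$ by $-f(-x)$ if necessary we may assume $f>0$ on $\R$. The decisive preliminary observation — and the conceptual crux — is that the hypotheses force $f\in\mathscr S(\R)$: $f$ is rapidly decreasing and, since $f'\in\mathscr O_M(\R)$, every $f^{(j)}$ is polynomially bounded, so applying the two‑sided Taylor estimate $|g'(x)|\le h^{-1}\sup_{[x-h,x+h]}|g|+\tfrac h2\sup_{[x-h,x+h]}|g''|$ successively to $g=f,f',f'',\dots$ (optimising $h=h(x)\le1$ at each stage) shows by induction that every $f^{(j)}$ is rapidly decreasing. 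In particular $\psi=x+f$ is a smooth increasing diffeomorphism of $\R$ (using $\inf|1+f'|>0$ and $f'\to0$) with $\psi(x)>x$; set $x_j:=\psi_{-j}(0)$, so that $x_0=0$, $x_j\searrow-\infty$, $x_{j-1}-x_j=f(x_j)$ and $\sum_{j\ge1}(x_{j-1}-x_j)=\infty$.

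The heart of the argument is the estimate
\[
   \psi_{-n}'(0)\ \le\ \frac{C^{*}}{f(0)}\,f\big(\psi_{-n}(0)\big)\qquad(n\ge0)
\]
for some constant $C^{*}\ge1$ independent of $n$. To get it I would write $\psi_{-n}'(0)=\prod_{j=1}^{n}\psi'(x_j)^{-1}$ and, using the mean value theorem on $[x_j,x_{j-1}]$ to choose $\eta_j\in(x_j,x_{j-1})$ with $f(x_{j-1})/f(x_j)=1+f'(\eta_j)=\psi'(\eta_j)$, observe the telescoping identity $\psi_{-n}'(0)\,f(0)/f(x_n)=\prod_{j=1}^{n}\psi'(\eta_j)/\psi'(x_j)$. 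Since $|\eta_j-x_j|\le x_{j-1}-x_j=f(x_j)$, $\psi''=f''$ and $\psi'\ge\delta_0:=\inf|1+f'|>0$, each logarithmic term satisfies $|\log\psi'(\eta_j)-\log\psi'(x_j)|\le\delta_0^{-1}(x_{j-1}-x_j)\sup_{[x_j,x_{j-1}]}|f''|$, so it remains to prove $\sum_j(x_{j-1}-x_j)\sup_{[x_j,x_{j-1}]}|f''|<\infty$. Replacing the supremum by $|f''(x_j)|$ plus an oscillation term bounded through $f'''$, this splits into: (i) a convergent Riemann‑sum/integral comparison giving a bound by $\int_{-\infty}^{0}|f''|$, finite because $f''\in\mathscr S(\R)$; and (ii) $\sum_j f(x_j)^2<\infty$, which follows from $f(x)\le C_1(1+x^2)^{-1}$ together with $\sum_j(x_{j-1}-x_j)(1+x_j^2)^{-1}\le\int_{-\infty}^{0}(1+t^2)^{-1}\,dt$ (using $|x_j|\ge|t|$ for $t\in[x_j,x_{j-1}]$). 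Hence $\sum_j|\log(\psi'(\eta_j)/\psi'(x_j))|<\infty$, the partial products are bounded by some $C^{*}$, and the estimate follows.

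With this in hand the conclusion is immediate: put $v:=\tfrac{C^{*}}{f(0)}\,f\in\mathscr S(\R)$, so $v\ge0$, $v(0)=C^{*}\ge1$ and $v(\psi_{-n}(0))\ge\psi_{-n}'(0)>0$ for all $n$. Consider the nonempty open subsets of $\mathscr O_M(\R)$
\[
   U=\{h:|h'(0)|>1\},\qquad V=\Big\{g:\ \sup_{x\in\R}|v(x)g'(x)|<1\Big\}.
\]
For $h\in U$ and $n\ge1$ we have $\psi_n(\psi_{-n}(0))=0$, so $(h\circ\psi_n)'(\psi_{-n}(0))=h'(0)\,\psi_n'(\psi_{-n}(0))=h'(0)/\psi_{-n}'(0)$, whence $|v(\psi_{-n}(0))(h\circ\psi_n)'(\psi_{-n}(0))|=\big(v(\psi_{-n}(0))/\psi_{-n}'(0)\big)|h'(0)|\ge|h'(0)|>1$, i.e.\ $h\circ\psi_n\notin V$; and $U\cap V=\emptyset$ since $|v(0)h'(0)|\ge|h'(0)|>1$ for $h\in U$. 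Thus $C_\psi^{n}(U)\cap V=\emptyset$ for every $n\ge0$, so $C_\psi$ is not topologically transitive. The only real difficulty lies in the middle paragraph: the crude bounds on $\sum_j(x_{j-1}-x_j)\sup_{[x_j,x_{j-1}]}|f''|$ can diverge because $|x_j|=|\psi_{-j}(0)|$ may grow arbitrarily slowly, and one must exploit that in exactly that regime $f$ (hence $f''$) decays correspondingly slowly at $x_j$, which is what the trick $f(x_j)^2\le f(x_j)\cdot C_1(1+x_j^2)^{-1}$ encodes; the identification $f\in\mathscr S(\R)$ is the point that makes all of this available.
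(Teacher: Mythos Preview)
Your proof is correct, but it proceeds along a substantially different route from the paper's.

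The paper does not prove $f\in\mathscr S(\R)$ and never touches the derivatives $\psi_{-n}'(0)$. Instead it invokes Lemma~\ref{lem: majorant} to obtain a \emph{monotone} majorant $g\in\mathscr S(\R)$ with $|f|\le g$, and then Lemma~\ref{prop: inverse} to conclude $g\circ\psi^{-1}\in\mathscr S(\R)$. With the open sets
\[
U=\{u:0<u(0)<1,\ 2<u(\psi(0))<3\},\qquad V=\Big\{v:\sup_{x}|v'(x)\,g(\psi^{-1}(x))|<\tfrac12\Big\},
\]
a single application of the Mean Value Theorem together with the monotonicity of $g\circ\psi^{-1}$ gives $|C_\psi^n v(\psi(0))-C_\psi^n v(0)|<\tfrac12$ for every $v\in V$ and $n\ge1$, so $C_\psi^n(V)\cap U=\emptyset$. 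This is considerably shorter and avoids both the Landau--Kolmogorov step and the summability estimate for $\sum_j f(x_j)\sup_{[x_j,x_{j-1}]}|f''|$.

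Your approach, by contrast, yields the genuinely quantitative statement $\psi_{-n}'(0)\le C^* f(\psi_{-n}(0))/f(0)$, which is of independent interest (it says the inverse orbit contracts at least as fast as $f$ decays) and could be reused in the context of Theorem~\ref{thm: mixing_bij}. One small point of presentation: your claimed bound in~(i), $\sum_j(x_{j-1}-x_j)|f''(x_j)|\le\int_{-\infty}^0|f''|$, does not follow from a bare Riemann--sum comparison, since $|f''|$ need not be monotone. The clean fix is to use the same device as in~(ii): $|f''(x_j)|\le C(1+x_j^2)^{-1}$ and then $\sum_j(x_{j-1}-x_j)(1+x_j^2)^{-1}\le\int_{-\infty}^0(1+t^2)^{-1}\,dt$; alternatively, write $|f''(x_j)|\le\frac{1}{x_{j-1}-x_j}\int_{x_j}^{x_{j-1}}|f''|+(x_{j-1}-x_j)\|f'''\|_\infty$ and absorb the second term into~(ii). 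Either way the argument closes.
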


\quad

It should be noted that under the hypotheses of the above theorem $\psi\in\mathscr{O}_M(\R)$ so that $C_\psi:\mathscr{O}_M(\R)\rightarrow\mathscr{O}_M(\R)$ is correctly defined. To prove the above theorem we need the following lemma
which is of independent interest.

\quad

\begin{lem}\label{prop: inverse}
	Let $f\in C^\infty(\R)$ be real valued such that $\sup_{x\in\R}(1+|x|^2)|f(x)|<\infty$, $\inf_{x\in\R}|1+f'(x)|>0$, and $f'\in\mathscr{O}_M(\R)$. Let $\psi(x)=x+f(x)$, $x\in\R$. Then, $\psi$ is bijective and $g\circ\psi^{-1}\in\mathscr{S}(\R)$ for every $g\in\mathscr{S}(\R)$.
\end{lem}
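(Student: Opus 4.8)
The plan is to first establish bijectivity of $\psi$ and then show the inverse map preserves rapid decay. Since $\inf_{x\in\R}|1+f'(x)|>0$ and $\psi'=1+f'$ is continuous, the derivative $\psi'$ cannot change sign, so $\psi$ is strictly monotone; without loss of generality $\psi'(x)\geq\delta>0$ for all $x$. From $\psi'\geq\delta$ we get $\psi(x)\to\pm\infty$ as $x\to\pm\infty$, hence $\psi:\R\to\R$ is a smooth bijection with a smooth inverse $\psi^{-1}$ (inverse function theorem), and $(\psi^{-1})'(y)=1/\psi'(\psi^{-1}(y))$ is bounded by $1/\delta$. In particular $|\psi^{-1}(y)-y|=|f(\psi^{-1}(y))|$, and since $\psi(x)=x+f(x)$ with $f$ bounded, there is a constant $R$ with $|\psi^{-1}(y)-y|\leq R$ for all $y$; this is the key geometric fact — $\psi^{-1}$ is a bounded perturbation of the identity.

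Next I would prove $g\circ\psi^{-1}\in\mathscr{S}(\R)$ for $g\in\mathscr{S}(\R)$. For the decay of $g\circ\psi^{-1}$ itself: given $N$, pick the Schwartz seminorm bound $|g(x)|\leq c_N(1+|x|^2)^{-N}$, and use $|\psi^{-1}(y)|\geq |y|-R$ together with the elementary estimate $(1+(|y|-R)^2)^{-N}\leq C_{N,R}(1+|y|^2)^{-N}$ for $|y|\geq 2R$ (and boundedness on $|y|\leq 2R$) to conclude $|g(\psi^{-1}(y))|\leq C(1+|y|^2)^{-N}$. For the derivatives, apply Fa\`a di Bruno to $(g\circ\psi^{-1})^{(j)}(y)$: each term is a product of $g^{(i)}(\psi^{-1}(y))$ — which decays rapidly by the argument just given — with factors of the form $(\psi^{-1})^{(r)}(y)$. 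So it remains to show each $(\psi^{-1})^{(r)}$ is a multiplier, i.e. lies in $\mathscr{O}_M(\R)$, so that the product of a rapidly decreasing function with these polynomially bounded factors is again rapidly decreasing.

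The main obstacle — and the step deserving real care — is showing $(\psi^{-1})^{(r)}\in\mathscr{O}_M(\R)$ for every $r\geq1$, given only $f'\in\mathscr{O}_M(\R)$ and $\inf|1+f'|>0$. The strategy is induction on $r$. For $r=1$, $(\psi^{-1})'=(1/\psi')\circ\psi^{-1}$; since $1/\psi'=1/(1+f')$ has all derivatives in $\mathscr{O}_M(\R)$ (because $\mathscr{O}_M(\R)$ is an algebra stable under $g\mapsto 1/g$ when $g$ is bounded away from zero — here one differentiates $1/\psi'$ repeatedly via the quotient rule and Fa\`a di Bruno, using $\psi'\in\mathscr{O}_M(\R)$), and since precomposition with $\psi^{-1}$ preserves the polynomial-growth estimates (again because $|\psi^{-1}(y)|\leq|y|+R$, so $(1+|\psi^{-1}(y)|^2)^{t}\leq C(1+|y|^2)^{t}$), we get $(\psi^{-1})'\in\mathscr{O}_M(\R)$. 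For the inductive step, differentiating the identity $(\psi^{-1})'(y)\cdot\psi'(\psi^{-1}(y))=1$ expresses $(\psi^{-1})^{(r)}$ as a polynomial (with $\mathscr{O}_M$-coefficients obtained from derivatives of $1/\psi'$ composed with $\psi^{-1}$) in the lower-order derivatives $(\psi^{-1})',\dots,(\psi^{-1})^{(r-1)}$, all of which lie in $\mathscr{O}_M(\R)$ by the inductive hypothesis; since $\mathscr{O}_M(\R)$ is an algebra, $(\psi^{-1})^{(r)}\in\mathscr{O}_M(\R)$. Combining this with the Fa\`a di Bruno expansion from the previous paragraph and the fact (used throughout the paper) that any $v\in\mathscr{S}(\R)$ factors as a product of arbitrarily many Schwartz functions, one bounds each $p_{m,v}(g\circ\psi^{-1})$ and concludes $g\circ\psi^{-1}\in\mathscr{S}(\R)$.
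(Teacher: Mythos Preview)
Your argument is essentially correct and more elementary than the paper's. One small imprecision: in the induction you assert ``$(\psi^{-1})^{(r)}\in\mathscr{O}_M(\R)$'', but read literally this makes the base case circular, since membership in $\mathscr{O}_M$ already requires \emph{all} higher derivatives to have polynomial growth. What your induction actually establishes---and all that is needed---is that each individual function $(\psi^{-1})^{(r)}$ has polynomial growth; the step via Fa\`a di Bruno applied to $(\psi^{-1})'=(1/\psi')\circ\psi^{-1}$ delivers exactly this, and the ``algebra'' property you invoke is simply that products and sums of polynomially bounded functions are polynomially bounded. With that reading the proof is complete. (The Schwartz factorization you mention at the end is also unnecessary here: once each $(\psi^{-1})^{(r)}$ is polynomially bounded and each $g^{(j)}\circ\psi^{-1}$ is rapidly decreasing, every term in the Fa\`a di Bruno expansion is rapidly decreasing directly.)

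The paper takes a shorter but less self-contained route. Instead of your inductive bound on $(\psi^{-1})^{(r)}$, it quotes Apostol's closed formula for higher derivatives of an inverse,
\[
(\psi^{-1})^{(n)}(x)=\bigl(\psi'(\psi^{-1}(x))\bigr)^{-(2n-1)}P_n\bigl(\psi'(\psi^{-1}(x)),\ldots,\psi^{(n)}(\psi^{-1}(x))\bigr)
\]
for a universal polynomial $P_n$, from which $|(\psi^{-1})^{(n)}(x)|\leq C(1+|\psi^{-1}(x)|^2)^k$ is immediate. It then combines this with the growth estimate $|\psi(x)|\geq|x|^{1/2}$ for large $|x|$ and appeals to the Galbis--Jord\'a characterization of symbols whose composition operator preserves $\mathscr{S}(\R)$. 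Your observation $|\psi^{-1}(y)-y|\leq\|f\|_\infty$ is sharper and simpler than the paper's $|\psi(x)|\geq|x|^{1/2}$, and your approach avoids both external references at the cost of writing out the induction and the Fa\`a di Bruno estimates explicitly.
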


\begin{proof}
	By hypothesis, $\psi'(x)\neq 0$ so that $\psi$ is injective. Moreover, since obviously $\lim_{|x|\rightarrow\infty}|f(x)|=0$, it follows $\lim_{x\rightarrow\pm\infty}\psi(x)=\pm\infty$ so that $\psi$ is bijective. Additionally, for $|x|$ sufficiently large we have
	$$|\psi(x)|\leq|x|+|f(x)|\leq|x|+\frac{\sup_{y\in\R}(1+|y|^2)|f(y)|}{(1+|x|^2)}\leq 2|x|$$
	for $x$ large which implies
	\begin{equation}\label{eq: symbol part 1}
		|\psi^{-1}(x)|\geq \frac{|x|}{2}\geq|x|^{1/k}
	\end{equation}
	whenever $|x|\geq k$ for some suitable $k\in\N$.
	Obviously, $|\psi^{-1}(x)|\leq (1+|\psi^{-1}(x)|^2)^{1/2}$, and due to \cite{MR1790920}, for $n\in\N$ there is a polynomial $P_n$ in $n$ variables with integer coefficients such that
	\begin{eqnarray*}
		(\psi^{-1})^{(n)}(x)&=&\left(\frac{1}{\psi'(\psi^{-1}(x))}\right)^{2n-1}P_n\left(\psi'(\psi^{-1}(x)),\psi^{(2)}(\psi^{-1}(x)),\ldots,\psi^{(n)}(\psi^{-1}(x))\right)\\
		&=&\left(\frac{1}{1+f'(\psi^{-1}(x))}\right)^{2n-1}\times\\
		&&\quad\times P_n\left(1+f'(\psi^{-1}(x)),f^{(2)}(\psi^{-1}(x)),\ldots,f^{(n)}(\psi^{-1}(x))\right).
	\end{eqnarray*}
	In particular, since $f'\in\mathscr{O}_M(\R)$, for a suitable constant $C>0$ and $k\in\N$ it holds for arbitrary $x\in\R$
	$$|(\psi^{-1})^{(n)}(x)|\leq \left(\frac{1}{\inf_{y\in\R}|1+f'(y)|}\right)^{2n-1}C\left(1+|\psi^{-1}(x)|^2\right)^k.$$
	Combining this with \eqref{eq: symbol part 1}, an application of \cite[Theorem 2.3]{MR3763350} proves the claim.
\end{proof}




\begin{proof}[Proof of Theorem \ref{not-trans}]
	The inclusion $\mathscr{O}_M(\R)\hookrightarrow C^\infty(\R)$ is
	continuous and has dense range, therefore  topological 
	transitivity of $C_\psi$ on $\mathscr{O}_M(\R)$ implies that $C_\psi$ is also topologically transitive on $C^\infty(\R)$. Thus 
	by   \cite[Theorem 4.2]{MR3543813} if $\psi$ has a fixed 
	point or $\psi'(x)=0$ for some $x\in\R$, then $C_\psi$ is not topologically transitive.
	
	From now on we will assume that $\psi'(x)\not=0$ for every $x\in\R$ and that $\psi$ has no fixed point which implies $f(x)\neq 0$ for every $x\in\R$. Moreover, we assume that $f(x)>0$ for all $x\in \R$; the proof in case $f(x)<0$ for all $x\in \R$ is similar.
	
	Let $g\in\mathscr{S}(\R)$ be as in Lemma \ref{lem: majorant} for $f$. By Lemma \ref{prop: inverse} we have  $g\circ\psi^{-1} \in\mathscr{S}(\R)$.
	Therefore, both sets 
	\[
	U=\{u\in \mathscr{O}_M(\R): 0<u(0)<1 \text{ and } 2<u(\psi(0))<3\}
	\] 
	and
	\[
	V= \{v\in \mathscr{O}_M(\R): \sup_{x\in\mathbb{R}}|v'(x)g(\psi^{-1}(x))|<0.5\}
	\] 
	are open in $\mathscr{O}_M(\R)$ and non-empty. If $v\in V$ and $n\geq 1$, then by the Mean Value theorem we get that 
	\[
	\left|C_\psi^n(v)(\psi(0))-C_\psi^n(v)(0)\right|=
	\left|v(\psi_{n+1}(0))-v(\psi_n(0))\right|
	=|v'(\xi)f(\psi_n(0))|,
	\]
	where $\xi\in [\psi_n(0),\psi_{n+1}(0)]$.
	Using monotonicity of $g\circ\psi^{-1}$ we get that  
	\begin{align*}
		|v'(\xi)f(\psi_n(0))|\leq &|v'(\xi)g(\psi_n(0))|\\
		=&|v'(\xi)(g\circ\psi^{-1})(\psi_{n+1}(0))|\\
		\leq &|v'(\xi)(g\circ\psi^{-1})(\xi)|<0.5.
	\end{align*}
	Thus $C_\psi^n(v)\not\in U$ and therefore $C_\psi$ is not topologically transitive.
\end{proof}
\begin{example}\label{ex: not_tt} Obviously, the function $\psi(x)=x+\exp(-x^2/2), x\in\R,$ belongs to $\mathscr{O}_M(\R)$, has no fixed points and satisfies $\psi'(x)\neq 0$, $x\in\R$. While the corresponding composition operator $C_\psi$ is hypercyclic/topologically transitive/mixing on $C^\infty(\R)$ by \cite[Theorem 4.2]{MR3543813}, it is not topologically transitive on $\mathscr{O}_M(\R)$ by Theorem \ref{not-trans}.
\end{example}

\quad

\noindent\textbf{Acknowledgement.} The research on the topic of this article was initiated during a visit of the first named author at Adam Mickiewicz University. He is very grateful to his colleagues from Pozna\'n for the cordial hospitality during this stay. Additionally, we thank the anonymous referees for their suggestions and comments which helped to improve the article.

\end{document}